\theoremstyle{definition}
\newtheorem{definition}{Definition}
\theoremstyle{remark}
\newtheorem{theorem}{\indent Theorem}
\newtheorem{lemma}{\indent Lemma}
\newtheorem{corollary}{\indent Corollary}
\newtheorem{remark}{\indent Remark}
\newtheorem{proposition}{\indent Proposition}
\begin{document}

\title{Safety-critical Control with Control Barrier Functions: A Hierarchical Optimization Framework}


\author{Junjun~Xie$^1$,
        ~Liang~Hu$^1$, ~Jiahu~Qin$^2$,
        ~Jun Yang$^3$,
        and ~Huijun Gao$^4$

\thanks{$^1$ J. Xie and L. Hu are with the Department
of Automation, School of Mechanical Engineering and Automation, Harbin Institute of Technology, Shenzhen 518055,
China (e-mail:  23s053076@stu.hit.edu.cn;~l.hu@hit.edu.cn).}
\thanks{$^2$ J. Qin is with the Department of Automation,
University of Science and Technology of China,
Hefei 230085, China (e-mail: jhqin@ustc.edu.cn).}
\thanks{$^3$ J. Yang is with the Department of Aeronautical and Automotive Engineering, Loughborough University, Leicestershire, LE11 3TU, UK (e-mail:
j.yang3@lboro.ac.uk). }
\thanks{$^4$ H. Gao is with the Research Institute of Intelligent Control and Systems, Harbin Institute of Technology, Harbin 150001, China (e-mail: hjgao@hit.edu.cn}

}


\maketitle

\begin{abstract}
The control barrier function (CBF) has become a fundamental tool in safety-critical systems design since its invention. Typically, the quadratic optimization framework is employed to  accommodate CBFs,  control Lyapunov functions (CLFs), other constraints and nominal control design. However, the constrained optimization framework involves hyper-parameters to tradeoff different objectives and constraints, which, if not well-tuned beforehand,  impact system performance and even lead to infeasibility. In this paper, we propose a hierarchical optimization framework that decomposes the multi-objective optimization problem into nested optimization sub-problems in a safety-first approach. The new framework addresses potential infeasibility on the premise of ensuring safety and performance as much as possible and applies easily in multi-certificate cases. With vivid visualization aids, we systematically analyze the advantages of our proposed method over existing QP-based ones in terms of safety, feasibility and convergence rates. Moreover, two numerical examples are provided that verify our analysis and show the superiority of our proposed method.
\end{abstract}

\begin{IEEEkeywords}
Safety-critical systems, control barrier functions, safe autonomy.
\end{IEEEkeywords}

\section{Introduction}\label{sec:intro}

As autonomous systems such as drones and self-driving vehicles become more prevalent in our societies, concerns about their safety, e.g., collisions with humans, 
have also grown. The potential for catastrophic outcomes due to incorrect operations of autonomous systems underscores the urgency of addressing safety issues. As such, significant research efforts have been directed toward developing novel safety-critical control design paradigms, where a unified system synthesis framework accommodates both metrics in traditional control system design such as stability and convergence rates, and safety concerns.


Recently, control barrier functions (CBFs) have been introduced as a promising method to accommodate safety specification into control design. \cite{acc_CDC,QP_TAC} presents a quadratic program (QP) framework that combines CBFs with control Lyapunov functions (CLFs), which becomes a basic design paradigm in safety-critical control. Based on it, various robust and adaptive CBF variants that consider model uncertainty and disturbance have been proposed \cite{adaptiveCBF_ACC,adaptiveCBF_XW,robustCBF_YH,DOBCCBF_TAC}. Besides, nonsmooth control barrier functions are proposed to compose multiple set-based constraints (e.g., connectivity and collision avoidance) \cite{nonsmoothcbf_tro,nonsmoothcbf_cdc,nonsmoothcbf_ral,nonsmoothcbf_ccta}. On the other hand, machine learning techniques have been utilized to synthesize learning-based CBF control in a model-free manner \cite{RLCBF_RSS,barriernet_TRO,expertdata_CDC,survey_TRO,S2RL_TIV} and deployed on robots and autonomous vehicles for verification \cite{cbf4rob_icra1,cbf4rob_ral1,cbf4rob_icra2,cbf4rob_icra3}.

A major challenge facing safety-critical control is how to balance safety, system performance and solution feasibility. The classic CBF based QPs method proposed in \cite{acc_CDC,QP_TAC} formulate safety requirements as a set of hard inequality constraints with manually specified performance indexes such as decay rate of CLFs and CBFs. Without fine-tuned performance indexes, the constrained quadratic problems could become infeasible unexpectedly, which is unacceptable for applications in safety-critical systems. To address the above issue, \cite{optimal-decay_ACC} proposed an improved method called Optimal-decay CLF-CBF QP, in which the hyper-parameters in CBF constraint are optimized rather than fixed ad hoc or handcrafted. Owing to the introduced extra slack variables, the Optimal-decay CLF-CBF QP is guaranteed point-wise feasible at the expense of relaxing CBF constraints into soft constraints, which means safety constraints are no longer strictly satisfied. \cite{sufficient_Automatica} provides sufficient conditions for feasibility using a dedicate CBF to describe the input constraints, but it is quite complicated to design such a CBF. 
\cite{checking_CDC2022} proposes an approach to check the compatibility between constraints, and \cite{2024_Automatica,withconstraints_LCSS2022} provide algorithms to address the conflict-constraints cases, all of which involve additional hand-crafted parameters. 

To overcome the above-mentioned shortcomings in existing safety-critical design methods, we propose a safety-first approach that prioritizes safety in the formulated optimization problems but without being over-conservative in safety. Different from the existing CLF-CBF QP framework, our safety-first approach decomposes the entire multi-objective optimization problem hierarchically to three nested optimization problems. Specifically, safety requirement is certified first, then stability and the last performance are optimized sequentially. If there does not exist feasible control input that satisfies safety specifications, the approximate solution with the least violation in safety is sought automatically, eliminating solution infeasibility. In addition, such a hierarchical optimization framework is flexible and easily extended to the cases with multiple safety certificates, e.g., collision avoidance with multiple obstacles. 

Moreover, we introduce a new concept sub-safe which extends the safe concept introduced in existing CBF based QPs methods \cite{acc_CDC,QP_TAC} by considering system safety with unsafe initial conditions. As a result, we design the safety-first CLF-CBF QP method such that not only the system with a safe initial condition will stay in the safe set, but also the system with an unsafe initial condition will be driven into the safe set if it is achievable.

The main contributions of this paper are summarized as follows:
\begin{enumerate}
    \item \textit{We propose a novel safety-first hierarchical optimization approach to safe-critical control design, which eliminates solution infeasibility, optimize system performance while respects safety with the higher priority. We also extend it to the more general multi-certificates problems with objectives of different levels of priorities, showing the flexibility of our proposed framework;}
    \item \textit{We provide a systematic analysis together with straightforward geometric visualization that reveals the cons of existing methods in handling feasibility, safety and convergence and pros of ours, as well as the relations between them;}
    \item \textit{Numerical simulations of two representative examples are provided with detailed comparative analysis, showing the superiority of our methods over the SOTA ones.}
\end{enumerate}

This paper is organized as follows. \autoref{sec: Preliminaries} introduce
the mathematical preliminaries used later and formulate the problem. Then in \autoref{sec:safetyfirst}, our novel method called Safety-first CLF-CBF QP is presented in steps and extended to multi-certificates cases. \autoref{sec:unified} gives two unified forms for all existing QP based methods and \autoref{sec:shortcomings} analyzes feasibility, safety and performance in the unified forms to reveals the cons of existing methods and pros of ours. Two representative examples are provided in \autoref{sec:simulation} to verify our analysis. Finally, \autoref{sec:conclusion} provides a summary and conclusion of this paper.
\\\\
\textit{Notation: }$\partial \mathcal{C}$ and $\operatorname{Int}(\mathcal{C})$ stand for the interior and boundary of the set $\mathcal{C}$. $\Vert\cdot\Vert$ denotes the Euclidean norm of the vector. $ x^TP(\cdot)$ denotes the quadratic form $x^TPx$ for shortness. $\nabla(\cdot)$ denotes the gradient of the function. A continuous function $\alpha:\mathbb{R}\rightarrow\mathbb{R}$ belongs to extended $\mathcal{K_\infty}$ functions if $\alpha(0)=0,\lim\limits_{r\to +\infty}\alpha(r)=+\infty,\lim\limits_{r\to -\infty}\alpha(r)=-\infty$ and it is strictly increasing. $\text{diag}\left[\begin{array}{cccc}a&b&\cdots
\end{array}\right]$ denotes the diagonal matrix $\left[\begin{array}{cccc}a\\&b\\&&\ddots
\end{array}\right]$ for shortness.

\section{Preliminaries and Problem Formulations}\label{sec: Preliminaries}

\subsection{Preliminaries}\label{subsec: Preliminaries}
In this section, we briefly introduce the concepts of CBFs, CLFs and two classic control design methods that integrate CBFs and CLFs in a QP framework.

Consider a control-affine system
\begin{equation}
    \label{model0}\dot{x}=f(x)+g(x)u,\ x(t_0)=x_0
\end{equation}
where $x\in \mathbb{R}^n$, $u\in \mathbb{R}^m$, and $f:\mathbb{R}^n\rightarrow\mathbb{R}^n$, $g:\mathbb{R}^n\rightarrow\mathbb{R}^{n\times m}$ are locally Lipschitz. Assume that a nominal feedback controller is used to stabilize the system:
\begin{equation}
   u=k(x),\ u\in \mathcal{U}\label{inputcons0}
\end{equation}
where the input constraint $\mathcal{U}$ is a convex polytope. 

\begin{definition}\label{Def_CLF}
    \cite{ESCLF_TAC}A continuously differentiable function $V:\mathbb{R}^n\rightarrow\mathbb{R}$ is an \textbf{exponentially stabilizing control Lyapunov function (ES-CLF)} if there exist positive constants $c_1,c_2,c_3 > 0 $ for all $x\in \mathbb{R}^n$ such that 
    \begin{gather}
        c_1\Vert x\Vert^2\leq V(x)\leq c_2\Vert x\Vert^2,
        \\
        \label{CLF_constraint}
        \inf _{u \in U}\left[L_{f} V(x)+L_{g} V(x) u+c_{3} V(x)\right] \leq 0.
    \end{gather}
    where $L_fV(x)\triangleq\nabla^T{V(x)} f(x)\in \mathbb{R}$ and $L_gV(x)\triangleq\nabla^T{V(x)} g(x)\in\mathbb{R}^{1\times m}$ are Lie-derivatives of $V(x)$ along $f(x)$ and $g(x)$, respectively.
\end{definition}
\begin{definition}\label{Def_CBF}
    \cite{theoryandapp_ECC} Assume a closed set $\mathcal{C}$ is defined as
    \begin{equation}\label{safeset}
        \begin{aligned}
            \mathcal{C} & =\left\{x \in \mathbb{R}^{n}: h(x) \geq 0\right\}, \\
            \partial \mathcal{C} & =\left\{x \in \mathbb{R}^{n}: h(x)=0\right\}, \\
            \operatorname{Int}(\mathcal{C}) & =\left\{x \in \mathbb{R}^{n}: h(x)>0\right\},
        \end{aligned}
    \end{equation}
    where $h:\mathbb{R}^n\rightarrow\mathbb{R}$ is a continuously differentiable function.  The function $h(x)$ is called a \textbf{zeroing control barrier function (ZCBF)} if there exists an extended $\mathcal{K_\infty}$ function $\alpha$ such that
    \begin{equation}\label{CBF_constraint}
        \sup _{u \in U}\left[L_{f} h(x)+L_{g} h(x) u+\alpha(h(x))\right] \geq 0.
    \end{equation}
        where $L_fh(x)\triangleq\nabla^T{h(x)} f(x)\in\mathbb{R}$ and $L_gh(x)\triangleq\nabla^T{h(x)}g(x)\in \mathbb{R}^{1\times m}$ are Lie-derivatives of $h(x)$ along $f(x)$ and $g(x)$, respectively, and the set $\mathcal{C}$ is called a safe set.
\end{definition}

\begin{remark}
    Another form of control barrier function called reciprocal control barrier functions (RCBF) can be converted to ZCBF under some mild conditions \cite{QP_TAC}. Compared to RCBF, ZCBF is more concise and widely used. Hence, in this paper all our analysis and discussions  are in the form of ZCBF, referred to as CBF for shortness. Additionally, by choosing a special case of the extended $\mathcal{K_\infty}$ function $\alpha$, that is $\alpha(h(x))=\gamma h(x)$ with a scalar $\gamma>0$, it makes \eqref{CBF_constraint} become an inequality in the similar form of \eqref{CLF_constraint}.
\end{remark}

\begin{definition}\label{def:safety}\cite{QP_TAC}\cite{theoryandapp_ECC}
    Given the system \eqref{model0}-\eqref{inputcons0} with a safe set $\mathcal{C}$ defined in \eqref{safeset}, the system is safe under the control $u$ if 
    \begin{equation}\label{op:safety}
        \forall x_0\in \mathcal{C}\quad\Rightarrow\quad x(t)\in\mathcal{C}, \forall t\in [0,\tau_{max})
    \end{equation}
    i.e., $\mathcal{C}$ is forward invariant.
\end{definition}

 Control barrier functions 
 provide a formal approach to safe control design. Given the system \eqref{model0}-\eqref{inputcons0}, if the condition
\begin{equation}\label{CBFperfect}
    L_{f} h(x)+L_{g} h(x) u+\gamma h(x) \geq 0
\end{equation}
is always satisfied, then the closed-loop system is safe all the time.  

Similarly, control Lyapunov functions are used for stable controller design. If the condition
\begin{equation}\label{CLFperfect}
    L_{f} V(x)+L_{g} V(x) u+\lambda V(x) \leq 0
\end{equation}
is always satisfied, then the closed-loop system is stable all the time. Since the CBF and CLF are bounded by exponential functions with $\gamma$ and $\lambda$, respectively, they are called \textbf{decay rates}, which characterize closed-loop transient dynamics.  

Motivated by the above results \eqref{CBFperfect}-\eqref{CLFperfect}, a naive idea of control design using CLFs and CBFs can be expressed as 
\begin{subequations}
    \begin{align}
    \label{object:Hard CLF-CBF QP}
    u^*&=\mathop{\arg\min}\limits_{u}\frac{1}{2}(u-k(x))^TH(\cdot)\\
        \label{cons:HardCLF} \textbf{s.t.} \quad\quad &L_{f} V(x)+L_{g} V(x) u+\lambda V(x) \leq 0\\
        \label{cons:HardCBF}&L_{f} h(x)+L_{g} h(x) u+\gamma h(x) \geq 0\\
        \label{cons:HardInput}& u\in \mathcal{U}
    \end{align}
\end{subequations}
since both \eqref{cons:HardCLF} and \eqref{cons:HardCBF} are hard constraints,  we call it \textbf{Hard CLF-CBF QP}.

Unfortunately, in practice the two constraints \eqref{cons:HardCLF} and \eqref{cons:HardCBF} may conflict, resulting in no feasible solutions. In addition, if consider the input constraints  \eqref{cons:HardInput} as well, there will be a higher chance of conflicts among them and thus of no feasible solutions.

To address the above infeasibility issues, the \textbf{CLF-CBF QP} method was proposed in \cite{QP_TAC}, which is summarised as below:
\begin{subequations}
    \begin{align}
    \label{QP_model}[u^*,\delta^*]=&\mathop{\arg\min}\limits_{[u,\delta]}\frac{1}{2}(u-k(x))^TH(\cdot)+p\delta^2\\
        \label{CLFcons} \textbf{s.t.} \quad\quad &L_{f} V(x)+L_{g} V(x) u+\lambda V(x) \leq \delta\\
        \label{CBFcons}&L_{f} h(x)+L_{g} h(x) u+\gamma h(x) \geq 0\\
        \label{inputcons}& u\in \mathcal{U}
    \end{align}
\end{subequations}
where  $\delta$ is a slack variable to make the CLF constraint \eqref{CLFcons} as a soft constraint,  mitigating its conflict with the CBF (hard) constraint \eqref{CBFcons}, and the positive definite symmetric matrix $H$ and the weighting coefficient $p$ are hyperparameters to be tuned beforehand. 

As analyzed in \cite{optimal-decay_ACC}, the above optimization problem may become infeasible without a fine-tuning hyper-parameter. Motivated by it, an improved method called \textbf{Optimal-decay CLF-CBF QP}  was proposed in \cite{optimal-decay_ACC} as below:

\begin{subequations}
{
    \begin{align}
    \label{opdecay_objective}[u^*,\delta^*,\omega^*]=&\mathop{\arg\min}\limits_{[u,\delta,\omega]}\frac{1}{2}(u-k(x))^TH(\cdot)+p\delta^2+p_\omega(\omega-\omega_0)^2\\
        \label{CLFcons2} \textbf{s.t.}\quad\quad&L_{f} V(x)+L_{g} V(x) u+\lambda V(x) \leq \delta\\
        \label{CBFcons2}&L_{f} h(x)+L_{g} h(x) u+\omega\gamma_0 h(x) \geq 0\\
        \label{inputcons2}&u\in \mathcal{U}
    \end{align}}
\end{subequations}
where two new parameters $p_\omega$ and $\omega_0$ are introduced to ensure the feasibility of the QP. Unfortunately, as mentioned in \cite{optimal-decay_ACC}, the safety of the system is no longer strictly guaranteed after \eqref{CBFcons} is relaxed into \eqref{CBFcons2}.


\subsection{Problem formulation}\label{subsec: problemformulation}
Before presenting problem formulation, we would like to introduce a new concept on safety.
\begin{definition}\label{def:subsafety}
    Given the system \eqref{model0}-\eqref{inputcons0} with a safe set $\mathcal{C}$ defined in \eqref{safeset}, the system is \textbf{\textit{sub-safe}} under the control $u$ if 
    \begin{equation}\label{eq:subsafety}
        \forall x_0\notin \mathcal{C},\quad x(t) \in \mathcal{C} \quad \text{when} \quad t \rightarrow \infty.
    \end{equation}
\end{definition}
\autoref{def:safety} only considers the cases $x_0\in \mathcal{C}$. In practice, safe sets could be designed conservatively for easy computation and hence the initial state of system under consideration is unsafe. A motivating example is, using a safe distance $D>0$ as the safe set to avoid collision in adaptive cruise control \cite{acc_CDC}. If the initial distance $d_0$ is small and $d_0<D$, it is "unsafe" according to the \autoref{def:safety}. When dealing with unsafe initial conditions , the existing QP-based methods could easily become infeasible, or lose safety guarantees, as shown in \autoref{fig3}. Motivated by this, we develop the concept of sub-safe to consider systems with unsafe conditions $h(x)<0$. 

\begin{figure}
    \centering \includegraphics[scale=0.35]{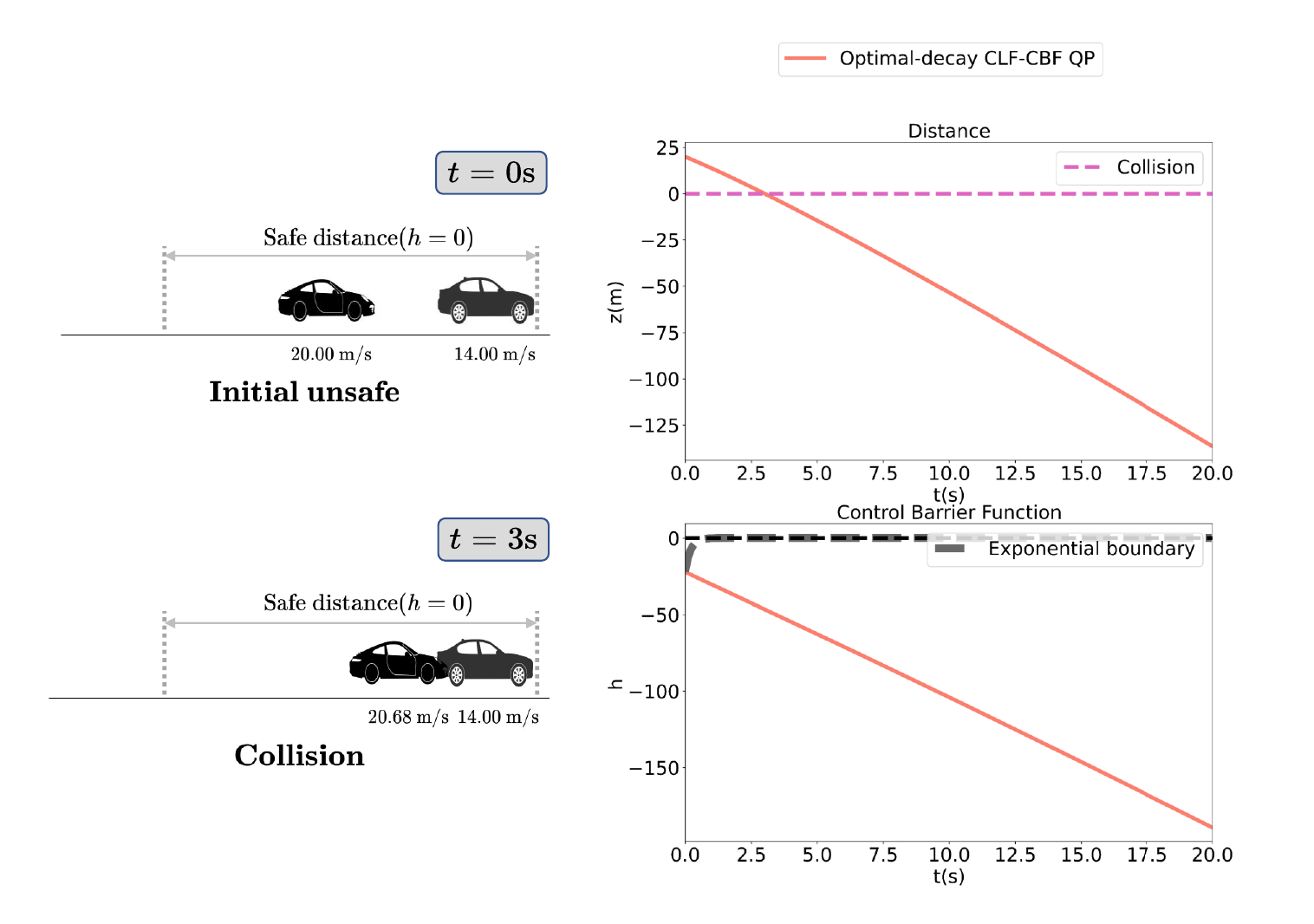}
    \vspace{-0.8cm}
    \caption{A motivating example for sub-safe.  With an unsafe initial distance between two vehicles, the existing method CLF-CBF QP \cite{QP_TAC} is infeasible while Optimal-decay CLF-CBF QP \cite{optimal-decay_ACC} drives the vehicle to speed up and collide. To consider such unsafe situations, we develop the new concept of sub-safe.}
    \label{fig3}
\end{figure}

The problem we will investigate in this paper is formally formulated as below:

 Given the system \eqref{model0}-\eqref{inputcons0} with a safe set $\mathcal{C}$ defined in \eqref{safeset}, we aim to present a new optimization framework to calculate the control sequence $u(t)$ such that
\begin{enumerate}
    \item (\textbf{Feasibility}) the optimization problem is always feasible, i.e. there always exist a control input at any state $x$;
    \item (\textbf{Safety}) the control input guarantees both the safety and sub-safety of the system as much as possible;
    \item (\textbf{Convergence}) the closed-loop system converges with prescribed decay rates in both safety and stability if achievable.
\end{enumerate}

\section{Safety-first CLF-CBF QP: a hierarchical optimization framework}\label{sec:safetyfirst}
In this section, we propose a new safety-first CLF-CBF QP framework and highlight its advantages. We then extend this framework to address broader scenarios involving multiple safety constraints simultaneously, such as navigation with collision avoidance of multiple obstacles.
\subsection{Safety-first Approach}\label{subsec:steps}

\begin{algorithm}[t]
\caption{Safety-first CLF-CBF QP}
\label{alg1}
\begin{algorithmic}[1]
\State Given the system \eqref{model0}, input constraint \eqref{inputcons0}, the nominal controller $k(x)$, Choose a control Lyapunov function $V$ and a control barrier function $h$. 
\State Set a positive definite weight matrix $H$ of input $u$ and decay rates $\lambda$, $\gamma$ of CLF, CBF constraints.
\State \% CBF constraint minimal-slacked
\State Solve \eqref{op:safetyfirst_1}
\State $\delta_2^*\leftarrow$ solution of \eqref{op:safetyfirst_1}
\State \% CLF constraint minimal-slacked
\State Solve \eqref{op:safetyfirst_2}
\State $\delta_1^*\leftarrow$ solution of \eqref{op:safetyfirst_2}
\State \% Input-norm minimized
\State Solve \eqref{op:safetyfirst_3}
\State $u^*\leftarrow$ solution of \eqref{op:safetyfirst_3}
\end{algorithmic}
\end{algorithm}

The basic idea is to set different levels of priorities between stability, safety and performance in the proposed optimization framework. Specifically, safety is always prioritized over stability, and then stability over performance. Hence, we propose to design the controller by considering the three requirements hierarchically rather than simultaneously.  Due to this, we call our method \textbf{\textit{safety-first CLF-CBF QP}}. The entire framework is composed of three nested sub-problems: sub-problem 1 $\xrightarrow{\delta_2^*}$ sub-problem 2 $\xrightarrow{\delta_1^*}$ sub-problem 3 $\rightarrow u^*$, and the three sub-problems are presented below:

First, consider the sub-problem 1
\begin{equation}
    \begin{aligned}
    \label{op:safetyfirst_1}   \textbf{Sub-problem 1} \quad  &[*,*,\delta_2^*]=\mathop{\arg\min}\limits_{[u,\delta_1,\delta_2]}\delta_2^2\\
    \textbf{s.t.}\quad\quad&L_{f} V(x)+L_{g} V(x) u+\lambda V(x) \leq \delta_1\\
        &L_{f} h(x)+L_{g} h(x) u+\gamma h(x) \geq \delta_2\\
        &u\in\mathcal{U}
    \end{aligned}
\end{equation}
Note that only the slack variable of CBF constraint ($\delta_2$) is minimized by solving the above sub-problem 1. Next, we substitute $\delta_2^*$ into the sub-problem 2 below:
\begin{equation}
    \begin{aligned}
    \label{op:safetyfirst_2}     \textbf{Sub-problem 2} \quad   &[*,\delta_1^*]=\mathop{\arg\min}\limits_{[u,\delta_1]}\delta_1^2\\
    \textbf{s.t.}\quad\quad&L_{f} V(x)+L_{g} V(x) u+\lambda V(x) \leq \delta_1\\
        &L_{f} h(x)+L_{g} h(x) u+\gamma h(x) \geq \delta_2^*\\
        &u\in\mathcal{U}
    \end{aligned}
\end{equation}
Since $\delta^*_2$ is fixed as the one obtained by solving the sub-problem 1, now only the slack variable of CBF constraint ($\delta_1$) is minimized by solving the above sub-problem 2. Finally, substitute $\delta_1^*$ into the sub-problem 3 and we have
\begin{equation}
    \begin{aligned}
    \label{op:safetyfirst_3}      \textbf{Sub-problem 3} \quad &u^*=\mathop{\arg\min}\limits_{u}(u-k(x))^TH(\cdot)\\
    \textbf{s.t.}\quad\quad&L_{f} V(x)+L_{g} V(x) u+\lambda V(x) \leq \delta_1^*\\
        &L_{f} h(x)+L_{g} h(x) u+\gamma h(x) \geq \delta_2^*\\
        &u\in\mathcal{U}
    \end{aligned}
\end{equation}
The solution $u^*$ is the control inputs. 

The above three sub-problems are feasible all the time, which will be proved later in \autoref{theorem:feasibility} later in  \autoref{subsec:feasiblity}.

\subsection{Multi-certificates Optimal Control Problems}\label{subsec:multi-objective}
The idea of hierarchical optimization can be extended into multi-objective cases naturally. If consider more metrics about the system such as connectivity \cite{nonsmoothcbf_cdc} or multi-obstacle in collision avoidance, we need more certificates such as CLFs or CBFs to guarantee the system. All the cases can be formulated, unified and addressed as follows. Assume there exists $n$ metrics being considered.
\begin{enumerate}
    \item Certificates construction. Construct certificates for each metric which is a mapping such as $\mathscr{C}:x\mapsto \mathscr{C}(x)\in \mathbb{R}$.
    \item Priorities sorting. Sort all the metrics into $k$ groups in order of priorities and the $i_{th}$ group consists of $m_i$ metrics. Then rename $\mathscr{C}$ as $\mathscr{C}_{ij}$, where $i$ denotes the priority level and $j=1,\ldots, m_j$ and $c_{ij}$ denotes its weight in the same level. A priority list is illustrated in  \autoref{fig:Priorities-sorting}. For example, the metric $\mathscr{C}_{21}$ is in level 2 with weight $c_{21}$, so $\mathscr{C}_{21}$ is prior to $\mathscr{C}_{11}$. Assume $c_{21}>c_{22}$, it means that $\mathscr{C}_{21}$ is not prior but with bigger weighting than $\mathscr{C}_{22}$.
    \item Solving the QP problems. Using the priority list, we can design a series of QP sub-problems which are
    \begin{equation*}
    \begin{aligned}
\textbf{Sub-problem 1}\quad&[*,\delta_{k1}^*,...,\delta_{km_k}^*]=\mathop{\arg\min}\sum\limits_{1\leq j\leq m_k} c_{kj}\delta_{kj}^2\\
 \textbf{s.t.}\quad\quad
&L_{f}\mathscr{C}_{ij}+L_{g}\mathscr{C}_{ij}u+r_{ij} \mathscr{C}_{ij}\leq( \text{or} \geq) \delta_{ij},\\ &\quad 1\leq i\leq k,1\leq j\leq m_i\\
&u\in \mathcal{U}
    \end{aligned}
    \end{equation*}
Substitute the optimal solution to the next sub-problem, and repeat the procedure until reaching the lowest level of priority:
\begin{equation*}
    \begin{aligned}
\textbf{Sub-problem k}\quad&
[*,\delta_{11}^*,...,\delta_{1m_{1}}^*]=\mathop{\arg\min}\sum\limits_{1\leq j\leq m_1} c_{1j}\delta_{1j}^2\\
 \textbf{s.t.}\quad\quad
&L_{f}\mathscr{C}_{1j}+L_{g}\mathscr{C}_{1j}u+r_{1j} \mathscr{C}_{1j}\leq( \text{or} \geq) \delta_{1j},\\ & \quad  1\leq j\leq m_1\\
&L_{f}\mathscr{C}_{ij}+L_{g}\mathscr{C}_{ij}u+r_{ij} \mathscr{C}_{ij}\leq( \text{or} \geq) \delta^*_{ij},\\ & \quad   2\leq i\leq k,1\leq j\leq m_i\\
&u\in \mathcal{U}
    \end{aligned}
    \end{equation*}
Substitute $\delta^*_{ij}$ into the final sub-problem and we have
\begin{equation*}
    \begin{aligned}
\textbf{Sub-problem k+1} \quad 
& u^*=\mathop{\arg\min}\limits_{u} (u-k(x))^TH(\cdot)\\
 \textbf{s.t.}\quad\quad
&L_{f}\mathscr{C}_{ij}+L_{g}\mathscr{C}_{ij}u+r_{ij} \mathscr{C}_{ij}\leq( \text{or} \geq) \delta^*_{ij},\\ & \quad 1\leq i\leq k,1\leq j\leq m_i\\
&u\in \mathcal{U}
    \end{aligned}
    \end{equation*}
the final solution $u^*$ is the control input. 
\end{enumerate}
    
\begin{figure}
    \centering
    \includegraphics[scale=.87]{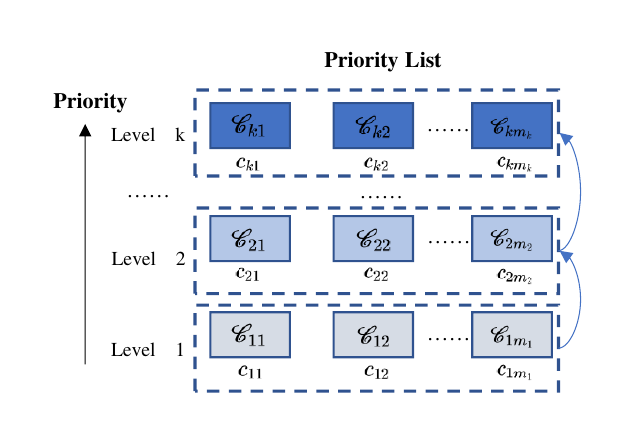}
    \caption{A priority list shows different priority of all the metrics considered. A higher level means that the metric has a higher priority, such as $\mathscr{C}_{21}$ is prior to $\mathscr{C}_{11}$. The weight $ c_{ij}$ denotes the relative importance among metrics within the same priority level.}
    \label{fig:Priorities-sorting}
\end{figure}
\section{The unified form of all CLF-CBF QP Frameworks}\label{sec:unified}



To facilitate comparative analysis among typical CLF-CBF QP methods and our proposed approach,  we first present a unified form for all three existing methods described in Section \ref{sec: Preliminaries}. Then we show that our safety-first CLF-CBF QP is equivalently a limit-weight form of the Unified CLF-CBF QP.

\subsection{Unified CLF-CBF QP}

\begin{lemma}\label{lemma1}
All of the CLF-CBF QP \eqref{QP_model}-\eqref{inputcons}, Optimal-decay CLF-CBF QP \eqref{opdecay_objective}-\eqref{inputcons2} and Hard CLF-CBF QP \eqref{object:Hard CLF-CBF QP}-\eqref{cons:HardInput} can be unified in the following form (called \textbf{Unified CLF-CBF QP} thereafter):
\begin{subequations}
    \begin{align}
    \label{unified_func}      [u^*,\Delta^*]=&\mathop{\arg\min}\limits_{[u,\Delta]}\frac{1}{2}
        \left[\begin{array}{cc}
             u-k(x)\\
             \Delta
        \end{array}\right]^T
        \left[\begin{array}{cc}
             H& 0 \\
             0&H_\Delta 
        \end{array}\right]
        \Big(\cdot\Big)\\
    \textbf{s.t.}\quad\quad\label{eq:CLFcons3}&L_{f} V(x)+L_{g} V(x) u+\lambda V(x) \leq \delta_1\\
        \label{CBFcons3}&L_{f} h(x)+L_{g} h(x) u+\gamma h(x) \geq \delta_2\\
        \label{inputcons3}&u\in\mathcal{U}\\
        \label{slackcons}&\Delta\in \mathcal{U}_\Delta
    \end{align}
\end{subequations}
where $\Delta=\left[\begin{array}{cc}\delta_1\\\delta_2
\end{array}\right]$, the diagonal matrix $H_\Delta\succ 0$, and $\mathcal{U}_\Delta \subset \mathbb{R}^2$.
\end{lemma}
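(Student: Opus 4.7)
The plan is to exhibit, for each of the three QP methods reviewed in \autoref{sec: Preliminaries}, specific choices of the hyperparameters $H_\Delta$ and $\mathcal{U}_\Delta$ in \eqref{unified_func}--\eqref{slackcons} that make the Unified CLF-CBF QP reduce exactly to that method. Since the CLF constraint \eqref{eq:CLFcons3} and the input constraint \eqref{inputcons3} already match the three originals verbatim, the work reduces to matching the cost, the CBF constraint, and any extra decision variables.

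First I would dispatch the two straightforward cases. For the Hard CLF-CBF QP \eqref{object:Hard CLF-CBF QP}--\eqref{cons:HardInput}, taking $\mathcal{U}_\Delta = \{(0,0)\}$ forces $\delta_1=\delta_2=0$, which collapses \eqref{eq:CLFcons3}--\eqref{CBFcons3} to the hard constraints \eqref{cons:HardCLF}--\eqref{cons:HardCBF} and reduces the cost to $\tfrac{1}{2}(u-k(x))^T H(\cdot)$; any $H_\Delta\succ 0$ works since it multiplies zero. For the CLF-CBF QP \eqref{QP_model}--\eqref{inputcons}, I would pick $\mathcal{U}_\Delta = \mathbb{R}\times\{0\}$ so that only the CBF constraint stays hard, and choose $H_\Delta = \operatorname{diag}[2p,\,h_2]$ for any $h_2>0$, which makes $\tfrac{1}{2}\Delta^T H_\Delta \Delta = p\delta_1^2$ match the penalty $p\delta^2$ in \eqref{QP_model}.

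The main obstacle is the Optimal-decay CLF-CBF QP \eqref{opdecay_objective}--\eqref{inputcons2}, whose extra decision variable $\omega$ multiplies $h(x)$ inside the CBF constraint rather than entering as a pure slack. My plan is to eliminate $\omega$ via the change of variables $\delta_2 \triangleq (\omega_0-\omega)\gamma_0 h(x)$ together with the identification $\gamma \triangleq \omega_0\gamma_0$. A direct substitution then shows that \eqref{CBFcons3} rewrites as $L_f h + L_g h\,u + \omega\gamma_0 h \geq 0$, recovering \eqref{CBFcons2}. Taking $\mathcal{U}_\Delta = \mathbb{R}^2$ and the (state-dependent) diagonal weight $H_\Delta=\operatorname{diag}\!\left[\,2p,\;\dfrac{2p_\omega}{(\gamma_0 h(x))^2}\,\right]$ then yields $\tfrac{1}{2}\Delta^T H_\Delta \Delta = p\delta_1^2 + p_\omega(\omega-\omega_0)^2$, matching \eqref{opdecay_objective} term by term.

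The delicate point in this last step is that the reparameterization $\omega\mapsto\delta_2$ is only bijective where $h(x)\neq 0$ and that $H_\Delta$ blows up as $h(x)\to 0$. I would address this by observing that the lemma only asks for $H_\Delta\succ 0$ pointwise (not constant in $x$ and not uniformly bounded), and by interpreting the equivalence in a limit sense on $\partial\mathcal{C}$ if needed. Combining the three constructions then establishes the lemma.
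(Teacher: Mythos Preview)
Your proposal is correct and follows essentially the same route as the paper: the same choices of $\mathcal{U}_\Delta$ for each method, the same change of variables $\delta_2=(\omega_0-\omega)\gamma_0 h(x)$ with $\gamma=\omega_0\gamma_0$, and the same state-dependent weight $H_\Delta=\operatorname{diag}[2p,\,2p_\omega/(\gamma_0 h(x))^2]$ for the Optimal-decay case. The one place where the paper is sharper is the boundary $h(x)=0$: rather than appealing to a limit (which is awkward since $H_\Delta$ blows up there), the paper simply observes that when $h(x)=0$ the variable $\omega$ disappears from the constraint \eqref{CBFcons2}, so $\omega=\omega_0$ is optimal and the Optimal-decay QP collapses to the already-handled CLF-CBF QP with $\mathcal{U}_\Delta=\mathbb{R}\times\{0\}$; you should replace your limit remark with this direct reduction.
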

\begin{proof}
Obviously, \eqref{inputcons3} is the same as \eqref{inputcons}, \eqref{inputcons2} and \eqref{cons:HardInput}. Hence what remains to prove is  how \eqref{unified_func}-\eqref{CBFcons3} reduce to \eqref{QP_model}-\eqref{CBFcons}, \eqref{opdecay_objective}-\eqref{CBFcons2} and \eqref{object:Hard CLF-CBF QP}-\eqref{cons:HardCBF} with different defined $\mathcal{U}_\Delta$ in \eqref{slackcons}, respectively, which is elaborated one by one as below:

1) \eqref{unified_func}-\eqref{CBFcons3} reduce to \eqref{QP_model}-\eqref{CBFcons}. Choose $\mathcal{U}_\Delta=(\mathbb{R},\{0\})$, i.e., set $\delta_2=0,\delta_1=\delta\in\mathbb{R}$, and $H_\Delta=\text{diag}
\left[\begin{array}{cc}2p&2\end{array}\right]$, then it is easily found that Unified CLF-CBF QP in \eqref{unified_func}-\eqref{slackcons} reduces to CLF-CBF QP.

2) \eqref{unified_func}-\eqref{CBFcons3} reduce to \eqref{opdecay_objective}-\eqref{inputcons2}. By setting $\delta_1=\delta\in\mathbb{R}$, $\Delta_\omega=\omega_0-\omega$, $\delta_2=\Delta_\omega\gamma_0 h(x)$ and $\gamma=\omega_0 \gamma_0$, obviously \eqref{CLFcons3} reduces to \eqref{CLFcons2}, and we have
\begin{align}
    &L_{f} h(x)+L_{g} h(x) u+\gamma h(x) \geq \delta_2\notag\\
    \Rightarrow \quad
    &L_{f} h(x)+L_{g} h(x) u+\omega_0\gamma_0 h(x) \geq \Delta_\omega\gamma_0 h(x)\notag\\
    \Rightarrow \quad
    &L_{f} h(x)+L_{g} h(x) u+[\omega_0-(\omega_0-\omega)]\gamma_0 h(x) \geq 0 \notag\\
    \Rightarrow \quad
    &L_{f} h(x)+L_{g} h(x) u+\omega\gamma_0 h(x) \geq 0 \notag
\end{align}
showing \eqref{CBFcons3} reduces to \eqref{CBFcons2}.

Next, consider \eqref{unified_func}. For the case $h(x)\neq0$, choose $\mathcal{U}_\Delta=\mathbb{R}^2$, take $p_h=\frac{p_\omega}{(\gamma_0 h(x))^2}$ and $H_\Delta=\text{diag}\left[\begin{array}{cc}
      2p&2p_h
\end{array}\right]$, then the quadratic forms on the right-hand side of \eqref{unified_func} can be rewritten as
\begin{align}
    &\frac{1}{2}(u-k(x))^TH(\cdot)+\frac{1}{2}\Delta^T H_\Delta\Delta\notag\\
    =&\frac{1}{2}(u-k(x))^TH(\cdot)+p\delta^2+p_h(\delta_2)^2\notag\\
    =&\frac{1}{2}(u-k(x))^TH(\cdot)+p\delta^2+\frac{p_\omega}{(\gamma_0 h(x))^2}(\delta_2)^2\notag\\
    =&\frac{1}{2}(u-k(x))^TH(\cdot)+p\delta^2+p_\omega(\frac{\delta_2}{\gamma_0 h(x)})^2\notag\\
    =&\frac{1}{2}(u-k(x))^TH(\cdot)+p\delta^2+p_\omega(\Delta_\omega)^2\notag\\
    =&\frac{1}{2}(u-k(x))^TH(\cdot)+p\delta^2+p_\omega(\omega-\omega_0)^2 \notag.
\end{align}
which is exactly \eqref{opdecay_objective}, and since $\mathcal{U}_\Delta=\mathbb{R}^2$, then $\omega=\omega_0-\frac{\delta_2}{\gamma_0 h(x)}\in \mathbb{R}$.

If $h(x)=0$, then $\delta_2=\Delta_\omega\gamma_0 h(x)=0$, which is equivalent to $\mathcal{U}_\Delta=\mathbb{R}\times\{0\}$, the same as CLF-CBF QP. Referring to \eqref{CBFcons2}, it is obvious that $\omega$ does not influence the feasible region of the problem. Therefore, to minminze \eqref{opdecay_objective}, $\omega=\omega_0$ will be always satisfied, then \eqref{opdecay_objective} becomes \eqref{QP_model}. As proved previously that \eqref{unified_func} reduces to \eqref{QP_model}, so it follows that \eqref{unified_func} reduces to \eqref{opdecay_objective}.

3) \eqref{unified_func}-\eqref{CBFcons3} reduce to \eqref{object:Hard CLF-CBF QP}-\eqref{cons:HardInput}. Choose $\mathcal{U}_\Delta=\{0\}\times\{0\}$, i.e., $\delta_1$ and $\delta_2$ are fixed as $0$, then \eqref{CLFcons3} and \eqref{CBFcons3} reduce to \eqref{cons:HardCLF} and \eqref{cons:HardCBF} respectively. Set $H_\Delta=\text{diag}\left[\begin{array}{cc}2&2
\end{array}\right]$, and since $\Delta=0$, then $\Delta^TH_\Delta\Delta=0$, as the form of Hard CLF-CBF QP.
\end{proof}
\begin{table}[htbp]
\caption{Comparisons of Slack Variables in Existing QP-based methods}

\label{tab:diffslackcons}
\centering
\begin{tabular}{ccccccc}
\toprule
\textbf{Methods}&$\mathcal{U}_\Delta$&$\mathcal{V}_\varepsilon$\\
\midrule
Hard CLF-CBF QP&$\{0\}\times\{0\}$&$\{0\}\times\{0\}$\\
\cline{1-1}
CLF-CBF QP&$\mathbb{R}\times\{0\}$&$\mathbb{R}\times\{0\}$\\
\cline{1-1}
{Optimal-decay} &$h(x)\neq 0:\mathbb{R}^2$& $h(x)\neq 0:\mathbb{R}^2$ \\{CLF-CBF QP}&$h(x)= 0:\mathbb{R}\times\{0\}$&$h(x)= 0:\mathbb{R}\times\{0\}$\\
\bottomrule
\end{tabular}
\end{table}
From \autoref{lemma1} and its proof, we can find that CLF-CBF QP, Optimal-decay CLF-CBF QP and Hard CLF-CBF QP are certain special cases of Unified CLF-CBF QP with $\mathcal{U}_\Delta$ of different values, as listed in details in \autoref{tab:diffslackcons}. In summary,  \textbf{CLF-CBF QP is equal to adding a slack variable in the CLF constraint on the basis of Hard CLF-CBF QP, and Optimal-decay CLF-CBF QP is  equal to adding another slack variable (as long as \boldmath ${h(x)\neq 0}$) on the basis of CLF-CBF QP to relax CBF constraints.}

Furthermore, a standardized form for Unified CLF-CBF QP in \eqref{unified_func}-\eqref{slackcons} is presented in  \autoref{lemma2}.

\begin{lemma}\label{lemma2}
Given the control system \eqref{model0}-\eqref{inputcons} with a nominal controller $k(x)$, if choose $S^TS=H$ and $S_\Delta=\sqrt{H_\Delta}=\text{diag}\left[\begin{array}{cc}
      p_1&p_2\end{array}\right]$, then
Unified CLF-CBF QP in \eqref{unified_func}-\eqref{slackcons} is equivalent to the following standardized form (called \textbf{SU CLF-CBF QP} for abbreviation):
\begin{subequations}
    \begin{align}
        \label{euddistance}[v^*,\varepsilon^*]&=\mathop{\arg\min}\limits_{[v,\varepsilon]}\frac{1}{2}(v^Tv+\varepsilon^T\varepsilon)\notag\\
        &=\mathop{\arg\min}\limits_{[v,\varepsilon]}\frac{1}{2}
        \left[\begin{array}{cc}
             v  \\
             \varepsilon 
        \end{array}\right]^T
        \left[\begin{array}{cc}
             v  \\
             \varepsilon 
        \end{array}\right]
    \end{align}
    \begin{align}
    \textbf{s.t.}\label{final_CLFcons}\quad\quad&L_{f^*} V(x)+L_{g^*} V(x) v+\lambda V(x) \leq \frac{\varepsilon_1}{p_1}\\
        \label{final_CBFcons}&L_{f^*} h(x)+L_{g^*} h(x) v+\gamma h(x) \geq \frac{\varepsilon_2}{p_2}\\
        \label{final_inputcons} &v\in \mathcal{V}\\
        \label{final_slackcons}&\varepsilon\in \mathcal{V}_\varepsilon
    \end{align}
\end{subequations}
where $v=S(u-k(x))$, $\varepsilon=\left[\begin{array}{cc}
      \varepsilon_1\\
     \varepsilon_2
\end{array}\right]=S_\Delta\Delta$, $L_{f^*}(\cdot)=L_f(\cdot)+L_g(\cdot)k(x)$, $L_{g^*}(\cdot)=L_g(\cdot)S^{-1}$, and $\mathcal{V}$ is a convex polytope including the origin if $U$ is a convex polytope including $k(x)$.
\end{lemma}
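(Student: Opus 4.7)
The plan is to verify the equivalence by performing the change of variables $v = S(u-k(x))$ and $\varepsilon = S_\Delta \Delta$ directly in each component of the Unified CLF-CBF QP, and checking that the objective, both constraints, and the feasible sets match. Since $S$ is nonsingular (as $S^TS = H \succ 0$) and $S_\Delta$ is nonsingular ($S_\Delta \succ 0$ as a diagonal matrix with positive entries by $H_\Delta \succ 0$), this is a bijective coordinate transformation, so optimality and feasibility are preserved.

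First I would rewrite the objective. Using $u - k(x) = S^{-1}v$ gives $(u-k(x))^T H(u-k(x)) = v^T S^{-T} S^T S S^{-1} v = v^T v$, and similarly $\Delta^T H_\Delta \Delta = \Delta^T S_\Delta^T S_\Delta \Delta = \varepsilon^T \varepsilon$. Hence the block-diagonal quadratic objective in \eqref{unified_func} collapses to $\tfrac12(v^Tv + \varepsilon^T\varepsilon)$, matching \eqref{euddistance}.

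Next I would transform the CLF constraint \eqref{eq:CLFcons3}. Substituting $u = S^{-1}v + k(x)$ into $L_f V(x) + L_g V(x) u + \lambda V(x) \le \delta_1$ splits the Lie derivative as $L_f V(x) + L_g V(x) k(x) + L_g V(x) S^{-1} v + \lambda V(x) \le \delta_1$, which by the definitions $L_{f^*} V(x) = L_f V(x) + L_g V(x) k(x)$ and $L_{g^*} V(x) = L_g V(x) S^{-1}$ becomes $L_{f^*} V(x) + L_{g^*} V(x) v + \lambda V(x) \le \delta_1$. Since $\varepsilon_1 = p_1 \delta_1$, this is exactly \eqref{final_CLFcons}. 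The CBF constraint \eqref{CBFcons3} is handled identically, with $\varepsilon_2 = p_2 \delta_2$ giving \eqref{final_CBFcons}.

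Finally I would translate the two set constraints. Define $\mathcal{V} = \{S(u-k(x)) : u \in \mathcal{U}\}$ and $\mathcal{V}_\varepsilon = \{S_\Delta \Delta : \Delta \in \mathcal{U}_\Delta\}$; the bijectivity of the affine maps $u \mapsto S(u-k(x))$ and $\Delta \mapsto S_\Delta \Delta$ makes these equivalences immediate. The claim that $\mathcal{V}$ is a convex polytope containing the origin whenever $\mathcal{U}$ is a convex polytope containing $k(x)$ I expect to be the only point requiring care: I would note that affine bijections preserve convex polytopes (they map vertices to vertices and respect affine combinations), and $k(x) \in \mathcal{U}$ maps to $0 \in \mathcal{V}$. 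With objective, constraints, and feasible sets all matched, the two optimization problems share the same feasible set and optimal value under the bijection, so $(u^*, \Delta^*)$ solves the Unified CLF-CBF QP if and only if $(v^*, \varepsilon^*) = (S(u^*-k(x)), S_\Delta \Delta^*)$ solves the SU CLF-CBF QP.
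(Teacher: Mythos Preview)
Your proposal is correct and follows essentially the same approach as the paper: a direct change of variables $v=S(u-k(x))$, $\varepsilon=S_\Delta\Delta$, checking objective, CLF/CBF constraints, and the set constraints in turn. The only cosmetic difference is that the paper first rewrites the closed-loop dynamics as $\dot x=f^*(x)+g^*(x)v$ to \emph{define} $f^*,g^*$ and then computes $L_{f^*},L_{g^*}$ from the definitions, whereas you substitute $u=S^{-1}v+k(x)$ directly into the constraint and read off the same identities; the paper also writes out the polytope $\mathcal{V}=\{v:AS^{-1}v\preceq b-Ak(x)\}$ explicitly rather than appealing to preservation of polytopes under affine bijections.
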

\begin{proof}
 Firstly, since $H$ is a positive symmetric matrix and $H_\Delta$ is diagonal, we can set $S^TS=H$, $S_\Delta=\sqrt{H_\Delta}$, $v=S(u-k(x))$ and $\varepsilon=S_\Delta\Delta$ where $S$ is also a positive symmetric matrix and $S_\Delta$ is still diagonal, then the objective function \eqref{unified_func} turns into \eqref{euddistance} obviously;
 
 Secondly, the system \eqref{model0}, input constraints \eqref{inputcons0} and slack constraints $\mathcal{U}_\Delta$ are converted to
\begin{align}
    \dot{x}&=f(x)+g(x)(S^{-1}v+k(x))\notag\\
    &=\mathop{\underline{f(x)+g(x)k(x)}}\limits_{\triangleq f^*(x)}+\mathop{\underline{g(x)S^{-1}}}\limits_{\triangleq g^*(x)}v\notag\\
    \label{keepaffine}&=f^*(x)+g^*(x)v\\
    \label{keepconvex}u&\in U\Rightarrow v\in \mathcal{V}=\{ v|v=S(u-k(x)), u\in U\} \\
    \label{slackunified}\Delta&\in \mathcal{U}_\Delta\Rightarrow \varepsilon\in \mathcal{V_\varepsilon}=\{ \varepsilon|\varepsilon=S_\Delta\Delta, \Delta\in \mathcal{U}_\Delta\} 
    \end{align}
which shows the system \eqref{keepaffine} is control-affine with input $v$, and $\mathcal{V}_\varepsilon$ is still a subset of $\mathbb{R}^2$, so \eqref{inputcons3},\eqref{slackcons} are converted into \eqref{final_inputcons} and \eqref{final_slackcons}, respectively;

As a result, for the system \eqref{keepaffine} subject to \eqref{keepconvex}, we have
\begin{equation}
\begin{aligned}
    L_{f^*}(\cdot)&=\nabla^T(\cdot)f^*(x)\\
    &=\nabla^T(\cdot)(f(x)+g(x)k(x))\\
    &=\nabla^T(\cdot)f(x)+\nabla^T(\cdot)g(x)k(x)\\
    &=L_f(\cdot)+L_g(\cdot)k(x)\notag
\end{aligned}
\end{equation}
and
\begin{equation}
\begin{aligned}
    L_{g^*}(\cdot)&=\nabla^T(\cdot)g^*(x)\\
    &=\nabla^T(\cdot)g(x)S^{-1}\\
    &=L_g(\cdot)S^{-1}\notag
\end{aligned}
\end{equation}
from \autoref{Def_CLF} and \autoref{Def_CBF}, which shows \eqref{eq:CLFcons3}-\eqref{CBFcons3} turn into \eqref{final_CLFcons}-\eqref{final_CBFcons};

Finally, let's examine the new input constraint of $v$.
Since $f:u\mapsto v=S(u-k(x))$ is an affine transformation, and $U$ is convex, then $\mathcal{V}$ is convex;

And since $U$ is a convex polytope, i.e.,
    \begin{equation}
        U:\{u|Au\preceq b,A\in \mathbb{R}^{p\times m},b\in \mathbb{R}^p\}\notag
    \end{equation}
    then $\mathcal{V}$ can be written as
    \begin{equation}
        \mathcal{V}:\{v|AS^{-1}v\preceq b-Ak(x),A\in \mathbb{R}^{p\times m},b\in \mathbb{R}^p\}\notag
    \end{equation}
 which is a convex polytope as well. 
    
    In addition, 
    \begin{equation}\notag
    \begin{gathered}
        k(x)\in U\Rightarrow Ak(x)\preceq b\\
        \Rightarrow AS^{-1}0=0\preceq b-Ak(x)\\
        \Rightarrow v=0 \in \mathcal{V}
    \end{gathered}
    \end{equation}
    which shows $\mathcal{V}$ includes the origin. 

To sum up, the optimal solution of SU CLF-CBF QP in \eqref{euddistance}
-\eqref{final_slackcons} is equivalent to the solution of Unified CLF-CBF QP in \eqref{unified_func}-\eqref{slackcons}, i.e.,
$v^*=S(u^*-k(x))$.
\end{proof}
    
\begin{corollary}\label{cor:invarsubspace}
    If $\mathcal{U}_\Delta=A\times B$ where $A,B\in \{\mathbb{R},\{0\}\}$, then $\mathcal{U}_\Delta$ is an invariant subspace of the invertible mapping $S_\Delta:\mathcal{U}_\Delta\rightarrow\mathcal{V}_\varepsilon$, i.e., $\mathcal{U}_\Delta=\mathcal{V}_\varepsilon$.
\end{corollary}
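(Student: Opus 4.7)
The plan is to exploit the fact that $S_\Delta$ is a positive diagonal matrix, so it acts on each coordinate independently by a positive scaling. Since the set $\mathcal{U}_\Delta$ has a product structure $A \times B$ with each factor equal to either the whole real line or the singleton $\{0\}$, the image under a coordinatewise positive scaling is again a product set of the same shape.

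Concretely, first I would recall from the setup of \autoref{lemma2} that $S_\Delta = \sqrt{H_\Delta} = \operatorname{diag}[p_1,\,p_2]$ with $p_1,p_2>0$ because $H_\Delta\succ 0$. Hence $S_\Delta$ is invertible with $S_\Delta^{-1}=\operatorname{diag}[1/p_1,\,1/p_2]$, establishing the invertibility claim for free. Second, I would write out the image directly:
\begin{equation*}
\mathcal{V}_\varepsilon \;=\; S_\Delta(\mathcal{U}_\Delta) \;=\; S_\Delta(A\times B) \;=\; (p_1 A)\times (p_2 B).
\end{equation*}

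Third, I would verify coordinatewise that positive scaling leaves both admissible factors fixed: if $A=\mathbb{R}$, then $p_1 A = \mathbb{R} = A$ since $p_1>0$; and if $A=\{0\}$, then $p_1 A = \{0\} = A$. The same argument applies to $B$ with $p_2$. Combining, $(p_1 A)\times(p_2 B)=A\times B = \mathcal{U}_\Delta$, so $\mathcal{V}_\varepsilon=\mathcal{U}_\Delta$, which simultaneously shows that $\mathcal{U}_\Delta$ is invariant under $S_\Delta$ (and under $S_\Delta^{-1}$).

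There is no real obstacle here; the statement is almost immediate once the product structure is unpacked. The only mild subtlety is emphasizing the strict positivity of $p_1,p_2$, which is what allows the scaling to preserve $\mathbb{R}$ (positive scalars do not flip or collapse $\mathbb{R}$) and trivially preserves $\{0\}$. This is the reason the four possible choices of $(A,B)$ in \autoref{tab:diffslackcons} all yield $\mathcal{V}_\varepsilon=\mathcal{U}_\Delta$ without any case-specific gymnastics.
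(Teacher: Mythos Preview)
Your argument is correct. The paper states this corollary without proof, treating it as immediate from the diagonal structure of $S_\Delta$ established in \autoref{lemma2}; your write-up simply makes explicit the coordinatewise positive-scaling argument that the paper leaves implicit, and there is nothing to compare beyond that.
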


The succinct form of SU CLF-CBF QP implies a straightforward geometrical explanation of the objective function in \eqref{euddistance}: the Euclidean distance from the extended input vector $\left[v  \ \varepsilon \right]^T$ to the origin. The corresponding slack variables of existing QP-based methods in SU CLF-CBF QP form are shown as \autoref{tab:diffslackcons}. 

\subsection{Limit-weight form of Unified CLF-CBF QP}\label{subsec:limit-weight}
Our proposed Safety-first CLF-CBF QP treats safety, stability and system performance at different levels of priorities. On the contrary, in SU CLF-CBF QP all different objectives are coined into one objective function with different weighting coefficients. In this subsection, we build a connection between our proposed Safety-first CLF-CBF QP and the SU CLF-CBF. That is, the former one is a limit-weight form of Unified CLF-CBF QP.
\begin{theorem}\label{theorem:limitweight}
   The control sequences $u^*$ and slack variables $\delta_1^*,\delta_2^*$ obtained by solving  Safety-first CLF-CBF QP \eqref {op:safetyfirst_1}-\eqref {op:safetyfirst_3} are also the optimal solution of the following limit-weight Unified CLF-CBF QP
    \begin{equation}\label{op:limitweight}
        \begin{aligned}
            &\min \quad (u-k(x))^TH(\cdot)+q\delta_1^2+q^2\delta_2^2\\
            \textbf{s.t.}\quad\quad
            &L_{f}V(x)+L_{g}V(x)u+\lambda V(x)\leq \delta_1\\
            &L_{f}h(x)+L_{g}h(x)u+\gamma h(x)\geq \delta_2\\
            &u\in \mathcal{U}
        \end{aligned}
    \end{equation}
when the limit weight $q\rightarrow +\infty$.
\end{theorem}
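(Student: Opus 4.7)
My plan is to eliminate the slack variables from both formulations, reducing each to an optimization over $u$ alone, and then invoke a classical penalty-method argument identifying the safety-first (lexicographic) solution with the $q\to+\infty$ limit of the weighted-sum solution.

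First I would eliminate the slacks in \eqref{op:limitweight}: for any fixed $u$, the inner minimization in $(\delta_1,\delta_2)$ subject to $\delta_1\ge C_V(u)$ and $\delta_2\le C_h(u)$ (where $C_V(u)\triangleq L_fV(x)+L_gV(x)u+\lambda V(x)$ and $C_h(u)\triangleq L_fh(x)+L_gh(x)u+\gamma h(x)$) admits closed-form optima $\delta_1^\circ(u)=\max(0,C_V(u))$ and $\delta_2^\circ(u)=\min(0,C_h(u))$. Substituting back reduces the limit-weight QP to $u_q=\arg\min_{u\in\mathcal{U}}f_q(u)$ with
\begin{equation*}
f_q(u)\triangleq\|u-k(x)\|_H^2+q\,\delta_1^\circ(u)^2+q^2\,\delta_2^\circ(u)^2.
\end{equation*}
The same reduction recasts \eqref{op:safetyfirst_1}-\eqref{op:safetyfirst_3} as the lexicographic minimization of the ordered triple $\bigl(\delta_2^\circ(u)^2,\delta_1^\circ(u)^2,\|u-k\|_H^2\bigr)$ over $u\in\mathcal{U}$, and a short case analysis (whether the CBF inequality is on-its-own feasible or not) shows that the safety-first optimizer $u^*$ has \emph{tight} slacks $\delta_2^\circ(u^*)^2=(\delta_2^*)^2$ and $\delta_1^\circ(u^*)^2=(\delta_1^*)^2$, so that the reference value satisfies $f_q(u^*)=\|u^*-k\|_H^2+q(\delta_1^*)^2+q^2(\delta_2^*)^2$.

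Next I would carry out the asymptotic comparison. Combining $f_q(u_q)\le f_q(u^*)$ with the unconditional sub-problem-1 lower bound $\delta_2^\circ(u_q)^2\ge(\delta_2^*)^2$, then dividing by $q^2$ and sending $q\to+\infty$ forces $\delta_2^\circ(u_q)^2\to(\delta_2^*)^2$ along any bounded subsequence of $\{u_q\}$. Feeding this rate back into the same comparison and dividing by $q$ forces $\delta_1^\circ(u_q)^2\to(\delta_1^*)^2$; the matching lower bound comes from sub-problem-2 optimality applied to any cluster point $\bar u$, which is sub-problem-2 feasible because the previous step already pins down $C_h(\bar u)\ge\delta_2^*$. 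Substituting both slack limits back into the comparison cancels the $q$- and $q^2$-order contributions and leaves $\|\bar u-k\|_H^2\le\|u^*-k\|_H^2$, while the sub-problem-3 feasibility of $\bar u$ together with strict convexity of the cost (guaranteed by $H\succ 0$) yields $\bar u=u^*$. Since every cluster point equals $u^*$, the whole sequence $(u_q,\delta_{1,q},\delta_{2,q})$ converges to $(u^*,\delta_1^*,\delta_2^*)$, which is the claim.

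The hard part will be guaranteeing that $\{u_q\}$ stays bounded as $q\to+\infty$, so that cluster points exist and the limits above are meaningful. The naive bound $\|u_q-k\|_H^2\le f_q(u^*)$ scales like $q^2(\delta_2^*)^2$ and is useless when safety is infeasible ($\delta_2^*\neq 0$). To resolve this I would exploit the sign inequality $\delta_2^\circ(u_q)^2-(\delta_2^*)^2\ge 0$, which makes the $q^2$-term cancel (rather than dominate) in the comparison $f_q(u_q)\le f_q(u^*)$, reducing it to $\|u_q-k\|_H^2\le\|u^*-k\|_H^2+q\bigl[(\delta_1^*)^2-\delta_1^\circ(u_q)^2\bigr]$; a second sign argument, available once the limit $\delta_2^\circ(u_q)^2\to(\delta_2^*)^2$ has been established (so that $u_q$ is asymptotically sub-problem-2 feasible and therefore $\delta_1^\circ(u_q)^2\ge(\delta_1^*)^2-o(1)$), removes the leftover $q$-term and delivers the needed uniform bound. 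In the standard situation where the input polytope $\mathcal{U}$ is compact this refinement is unnecessary and boundedness is automatic.
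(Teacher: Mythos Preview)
Your plan follows the same penalty-limit idea as the paper---show that the weighted minimizer picks out the safety-first (lexicographic) solution level by level as $q\to\infty$---but with far more care: the paper simply rescales the objective by $1/q^2$, observes that the pointwise limit is $\delta_2^2$, and declares the optimal $\delta_2'=\delta_2^*$ (then iterates) without ever justifying the exchange of limit and $\arg\min$. Your slack elimination and cluster-point comparison supply exactly that missing rigor, and the paper implicitly relies on the compactness of the input polytope $\mathcal{U}$ that you make explicit.

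One step in your outline does need more than you give it. For the third level you claim that ``substituting both slack limits back into the comparison cancels the $q$- and $q^2$-order contributions and leaves $\|\bar u-k\|_H^2\le\|u^*-k\|_H^2$.'' But the comparison reads
\[
\|u_q-k\|_H^2 \le \|u^*-k\|_H^2 + q\bigl[(\delta_1^*)^2-\delta_1^\circ(u_q)^2\bigr] + q^2\bigl[(\delta_2^*)^2-\delta_2^\circ(u_q)^2\bigr],
\]
and although the $q^2$-bracket is nonpositive, the $q$-bracket is $q$ times something you have only shown tends to zero; its product with $q$ need not vanish. (When $\delta_2^*<0$, the point $u_q$ is generically \emph{not} sub-problem-2 feasible, so $\delta_1^\circ(u_q)^2$ can dip below $(\delta_1^*)^2$ and the bracket is positive.) What is missing is a \emph{rate}: for instance, local Lipschitz continuity of the parametric value function $s\mapsto\min\{\delta_1^\circ(u)^2:u\in\mathcal{U},\,C_h(u)\ge\delta_2^*-s\}$ (a convex QP over a linearly perturbed polytope, hence standard) yields $(\delta_1^*)^2-\delta_1^\circ(u_q)^2\le L\,|\delta_2^*-C_h(u_q)|$, and feeding this back into the original comparison bootstraps the $\delta_2$-gap from $O(1/q)$ to $O(1/q^2)$, after which the $q$-term is $O(1/q)$ and the desired inequality for $\bar u$ follows. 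Once this refinement is added your argument is complete and strictly sharper than the paper's own proof.
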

\begin{figure*}[th]
\centering
\subfigure[Illustration of the different regions defined by the CBF constraint, CLF constraint and input constraints of SU CLF-CBF QP.]{
\vspace{-0.4cm}
\label{fig1a}
\includegraphics[scale=0.45]{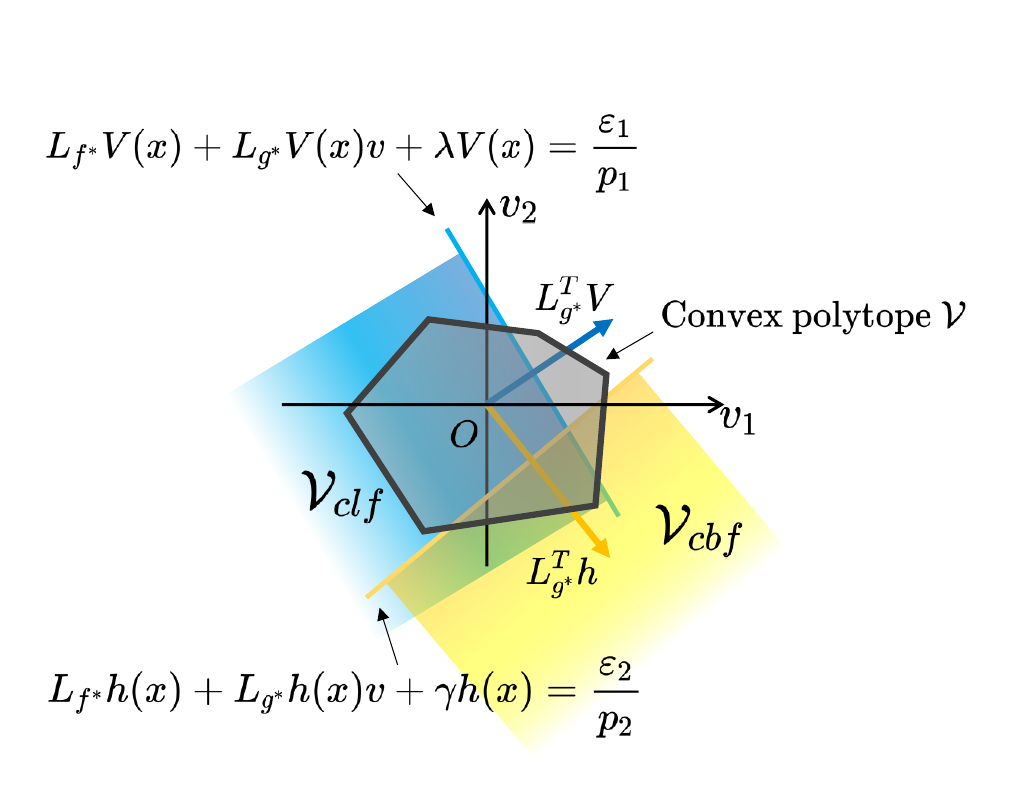}}
\subfigure[The case $\mathcal{V_{\varepsilon}}=\mathbb{R}\times \{0\}$ and the corresponding CLF-CBF QP is feasible.]{
\vspace{-0.4cm}
\label{fig1b}
\includegraphics[scale=0.45]{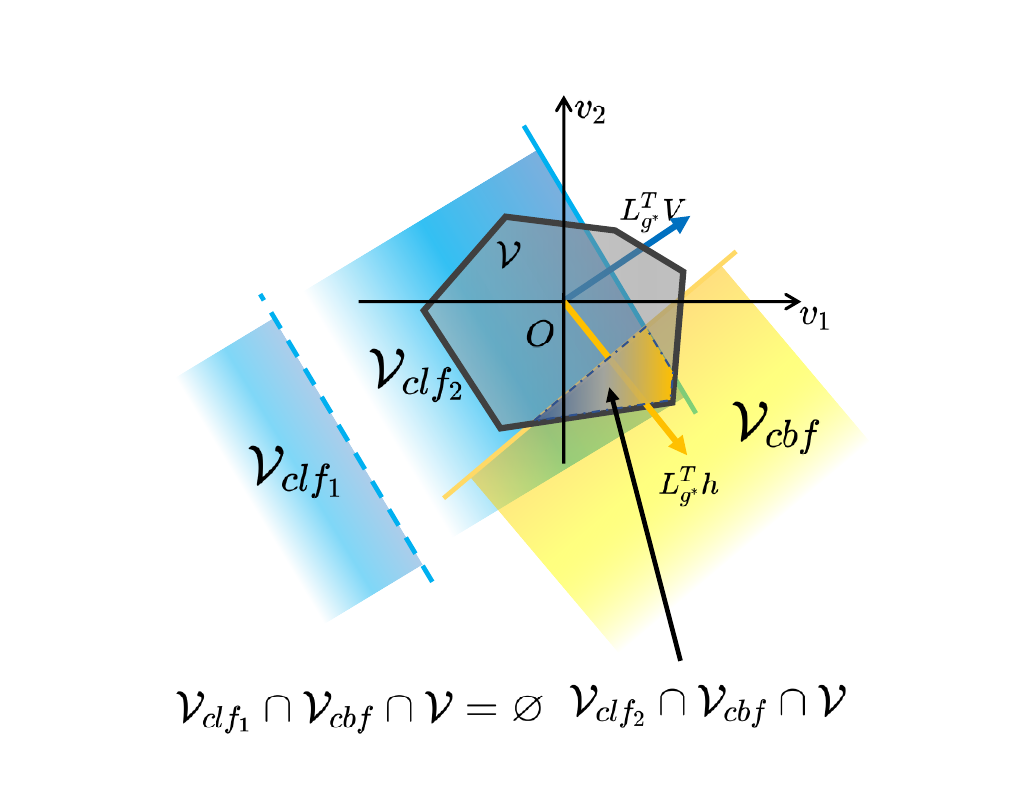}}
\subfigure[The case that $\mathcal{V_{\varepsilon}}=\mathbb{R}\times \{0\}$ and the corresponding CLF-CBF QP is infeasible.]{
\vspace{-0.2cm}
\label{fig1c}
\includegraphics[scale=0.45]{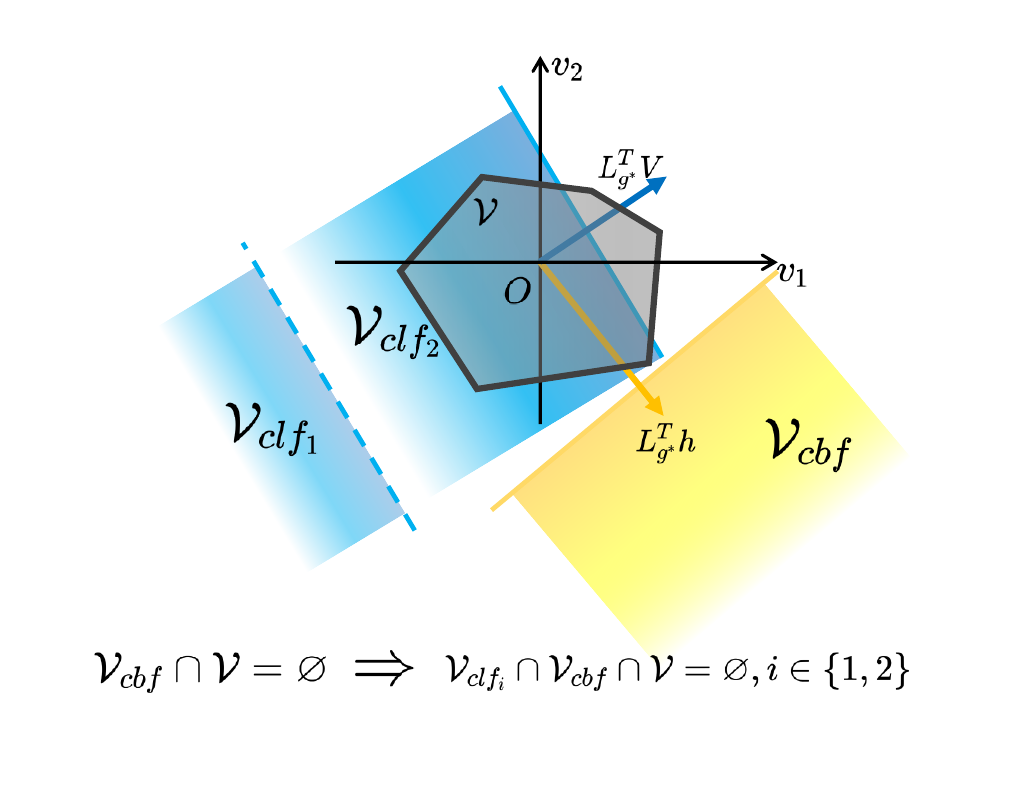}}
\subfigure[The case that $\mathcal{V_{\varepsilon}}=\mathbb{R}\times \mathbb{R}$ and the corresponding Optimal CLF-CBF QP ($h(x)\neq 0$) and Safety-first CLF-CBF QP are feasible.]{
\vspace{-0.4cm}
\label{fig1d}
\includegraphics[scale=0.45]{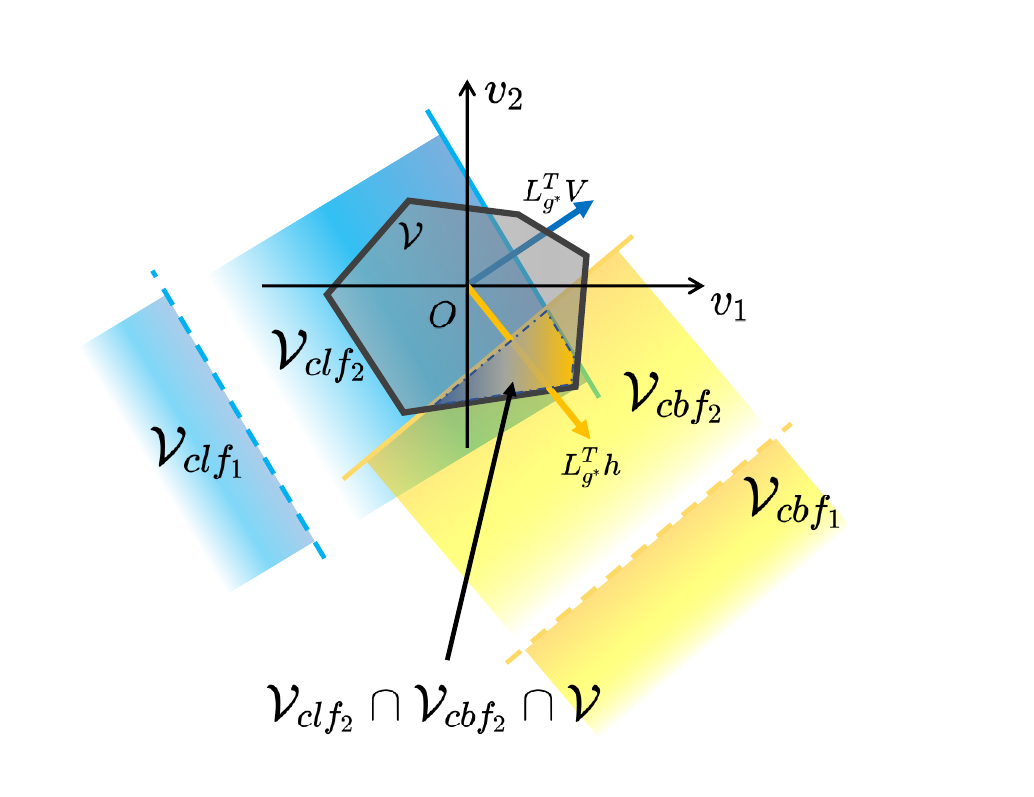}}
\caption{\centering{
Illustrations of feasibility and infeasiblity of SU CLF-CBF QP.}}
\label{figs:1a-1d}
\end{figure*}

\begin{proof}
    First, we prove that $[u^*,\delta_1^*,\delta_2^*]$ satisfy all constraints of \eqref{op:limitweight}. Since $u^*$ is the solution of \eqref{op:safetyfirst_3}, we have that 
    \begin{align*}
        &L_{f} V(x)+L_{g} V(x) u^*+\lambda V(x) \leq \delta_1^*\\
        &L_{f} h(x)+L_{g} h(x) u^*+\gamma h(x) \geq \delta_2^*\\
        &u^*\in\mathcal{U}
    \end{align*}

    Then, we prove that $[u^*,\delta_1^*,\delta_2^*]$ is optimal. Assume the optimal solution of \eqref{op:limitweight} is $[u',\delta_1',\delta_2']$.  Minimizing the following objective function is equivalent to that in \eqref{op:limitweight},
    \begin{equation*}
        \frac{1}{q^2}(u-k(x))^TH(\cdot)+\frac{1}{q}\delta_1^2+\delta_2^2
    \end{equation*}
    then
    \begin{equation*}
         \lim_{q \to +\infty}\frac{1}{q^2}(u-k(x))^TH(\cdot)+\frac{1}{q}\delta_1^2+\delta_2^2 = \delta_2^2
    \end{equation*}
    which shows the objective function is equivalent to \eqref{op:safetyfirst_1}, so that $\delta_2'=\delta_2^*$. Since $\delta_2$ is determined, then \eqref{op:limitweight} can be rewritten as
    \begin{equation*}
        \begin{aligned}
            &\min \quad (u-k(x))^TH(\cdot)+q\delta_1^2\\
            \textbf{s.t.}\quad\quad
            &L_{f}V(x)+L_{g}V(x)u+\lambda V(x)\leq \delta_1\\
            &L_{f}h(x)+L_{g}h(x)u+\gamma h(x)\geq \delta_2^*\\
            &u\in \mathcal{U}
        \end{aligned}
    \end{equation*}
    and its objective function can also be converted into 
    \begin{equation*}
        \frac{1}{q}(u-k(x))^TH(\cdot)+\delta_1^2
    \end{equation*}
    Similarly, we have
    \begin{equation*}
         \lim_{q \to +\infty}\frac{1}{q}(u-k(x))^TH(\cdot)+\delta_1^2 = \delta_1^2
    \end{equation*}
    which infers $\delta_1'=\delta_1^*$. Simliarly to the above procedure, we can further get $u'=u^*$.
\end{proof}

\autoref{theorem:limitweight} indicts that by setting a great enough $q$ in Unified CLF-CBF QP, we can obtain an approximate solution to Safety-first CLF-CBF QP in one step.

\section{Analysis of QP Based Methods: Feasibility, Safety and Convergence}\label{sec:shortcomings}

In this section, we will systematically investigate  the three existing CLF-CBF QP methods summarized in \autoref{subsec: Preliminaries} and compare them with our proposed method from three dimensions: feasibility, safety and convergence degradation.  

\subsection{Feasibility Analysis}\label{subsec:feasiblity}
Considering the feasible region in input space defined by \eqref{final_CLFcons}-\eqref{final_inputcons}, both CLF constraint \eqref{final_CLFcons} and CBF constraint \eqref{final_CBFcons} are linear on the variable $v$, which means each inequation determines a half-space in $\mathcal{V}$, and the boundary is a hyperplane with normal vector $L_{g^*}^{T}(\cdot)$, as shown in \autoref{fig1a}. 

To simplify mathematical expressions, we define
\begin{equation}
    \begin{aligned}
    \mathcal{V}_{clf}(x)&=\{v|L_{f^*} V(x)+L_{g^*} V(x) v+\lambda V(x) \leq \frac{\varepsilon_1}{p_1}\}\\
    \mathcal{V}_{cbf}(x)&=\{v|L_{f^*} h(x)+L_{g^*} h(x) v+\gamma h(x) \geq \frac{\varepsilon_2}{p_2}\}
    \end{aligned}
\end{equation}
to represent the half-spaces determined by \eqref{final_CLFcons} and \eqref{final_CBFcons}, respectively. Accordingly, we have the following result on feasibility.


\begin{lemma}\label{theorem:feasibility}
Given that $\mathcal{V}$ is a closed convex polytope including the origin,  if $\mathcal{V}_\varepsilon=\mathbb{R}^2$, then SU CLF-CBF QP \eqref{euddistance}-\eqref{final_slackcons} is always feasible; otherwise, the feasibility cannot be guaranteed all the time.
\end{lemma}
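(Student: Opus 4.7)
The plan is to split the biconditional and prove the two directions separately: for sufficiency ($\mathcal{V}_\varepsilon = \mathbb{R}^2$ implies always feasible) I would construct an explicit feasible point, and for necessity (any other choice of $\mathcal{V}_\varepsilon$ from Table~\ref{tab:diffslackcons} admits an infeasible state) I would exhibit a counterexample state along the lines of Fig.~\ref{fig1c}.

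For sufficiency, the key observation is that $0 \in \mathcal{V}$ by hypothesis, and the two certificate constraints \eqref{final_CLFcons}--\eqref{final_CBFcons} are affine in $(v,\varepsilon)$ with the slack components appearing only through $\varepsilon_1/p_1$ and $\varepsilon_2/p_2$, $p_1,p_2>0$. I would substitute $v = 0$, at which point each constraint reduces to a single scalar inequality in one slack: $\varepsilon_1 \geq p_1(L_{f^*}V(x)+\lambda V(x))$ and $\varepsilon_2 \leq p_2(L_{f^*}h(x)+\gamma h(x))$. Since $\mathcal{V}_\varepsilon = \mathbb{R}^2$ imposes no restriction, such $\varepsilon$ always exists for every $x$, so $(0,\varepsilon)$ is feasible. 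Nonemptiness of a closed convex polyhedral feasible set together with strict convexity of \eqref{euddistance} then yields existence of the minimizer.

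For necessity, the unifying observation is that any restriction $\mathcal{V}_\varepsilon \neq \mathbb{R}^2$ listed in Table~\ref{tab:diffslackcons} forces at least one of $\varepsilon_1,\varepsilon_2$ to vanish, turning the corresponding certificate constraint into a hard affine half-space constraint $\mathcal{V}_{clf}(x)$ or $\mathcal{V}_{cbf}(x)$ on $v$. For each admissible $\mathcal{V}_\varepsilon$ I would then construct a state $x$ at which the intersection of the active hard half-spaces with the bounded input polytope $\mathcal{V}$ is empty. When $\mathcal{V}_\varepsilon = \{0\}\times\{0\}$, I would pick $x$ so the two half-spaces $\mathcal{V}_{clf}(x)$ and $\mathcal{V}_{cbf}(x)$ separate inside $\mathcal{V}$; when $\mathcal{V}_\varepsilon = \mathbb{R}\times\{0\}$ (and in the $h(x)=0$ branch of Optimal-decay), it suffices to select $x$ with $\mathcal{V}_{cbf}(x)\cap \mathcal{V} = \emptyset$, which is exactly the situation rendered in Fig.~\ref{fig1c}.

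The main obstacle I expect is making this counterexample construction rigorous rather than purely pictorial, because the existence of such an $x$ depends on the given $f,g,V,h$ and on $\mathcal{V}$ being bounded. The way I would discharge this is to choose $x$ such that $L_{f^*}h(x)+\gamma h(x)$ is sufficiently negative while $L_{g^*}h(x)$ points in a direction whose required compensating half-space $\{v : L_{g^*}h(x)\,v \geq -L_{f^*}h(x)-\gamma h(x)\}$ lies entirely outside $\mathcal{V}$; a standard supporting-hyperplane argument on the bounded convex polytope $\mathcal{V}$ then produces the desired infeasibility. The sufficiency direction, in contrast, is essentially a one-line substitution, so the asymmetric difficulty of the two halves is expected.
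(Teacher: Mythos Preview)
Your proposal is correct and follows essentially the same route as the paper. For sufficiency the paper also builds an explicit feasible point by first fixing an admissible $v_0\in\mathcal{V}$ (you specialize to $v_0=0$, which is fine since $0\in\mathcal{V}$) and then reading off the slacks from the two affine constraints; for necessity the paper likewise argues geometrically that when one slack is pinned the corresponding hard half-space $\mathcal{V}_{cbf}(x)$ can be pushed beyond the bounded polytope $\mathcal{V}$, characterizing this via a max-projection $r$ versus a min-distance $r_0$ comparison rather than exhibiting an explicit state $x$. Your worry that the counterexample is ``pictorial rather than rigorous'' is well taken, but note that the paper's own proof operates at the same level of rigor---it asserts that the configuration of \autoref{fig1c} can occur without tying it to specific data $f,g,V,h$---so you are not being asked to do more than the paper does.
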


\begin{proof}
If $\mathcal{V}_\varepsilon=\mathbb{R}^2$, choose $\varepsilon_{v}=p_1[L_{f^*} V(x)+L_{g^*} V(x) v_0+\lambda V(x)],\varepsilon_{h}=p_2[L_{f^*} h(x)+L_{g^*} h(x) v_0+\gamma h(x)]$, then $\forall v_0\in\mathcal{V}$, $v_0\in \mathcal{V}_{clf}\cap\mathcal{V}_{cbf}\cap\mathcal{V}$, which means $\mathcal{V}_{clf}\cap\mathcal{V}_{cbf}\cap\mathcal{V}\neq\varnothing$, thus SU CLF-CBF QP \eqref{euddistance}-\eqref{final_slackcons} is always feasible.

Next, consider the case that $\mathcal{V}_\varepsilon\neq\mathbb{R}^2$. When one of $\varepsilon_1$ and $\varepsilon_2$ is fixed as a constant, e.g., $\varepsilon_2=c$ where $c$ is a constant, then $\mathcal{V}_{cbf}$ is fixed. Define
\begin{align*}
r=&\mathop{\arg\max}\limits_{v}L_{g^*}h(x)v\notag\\
    \textbf{s.t.}\quad\quad&v\in \mathcal{V}
\end{align*}
then $r$ means the the maximum projection along the direction of $L_{g^*}^Th(x)$ in $\mathcal{V}$. Similarly, let 
\begin{align*}
    r_0=&\mathop{\arg\min}\limits_v \Vert v \Vert\notag\\
    \textbf{s.t.}\quad\quad&v\in \mathcal{V}_{cbf}
\end{align*}
where $ r_0 $ means the minimum distance from $\mathcal{V}_{cbf}$ to origin. 

If $ \Vert r \Vert  \geq  \Vert r_0 \Vert$, as shown as \autoref{fig1b}, $\mathcal{V}_{cbf}\cap\mathcal{V}\neq\varnothing$. Choose any $v_0\in\mathcal{V}_{cbf}\cap\mathcal{V}$, and let $\varepsilon_{v}=p_1[L_{f^*} V(x)+L_{g^*} V(x) v_0+\lambda V(x)]$, we can easily find that $v_0\in\mathcal{V}_{clf}\cap\mathcal{V}_{cbf}\cap\mathcal{V}$, so $\mathcal{V}_{clf}\cap\mathcal{V}_{cbf}\cap\mathcal{V}\neq\varnothing$. In this case, the optimization problem is feasible.

If $\Vert r\Vert < \Vert r_0 \Vert $, as shown as \autoref{fig1c}, $\mathcal{V}_{cbf}\cap\mathcal{V}=\varnothing$, so in this case the optimization problem is infeasible.
\end{proof}

Referring to \autoref{lemma1} and \autoref{lemma2}, SU CLF-CBF QP can be reduced to CLF-CBF QP \eqref{QP_model}-\eqref{inputcons} and Optimal-decay CLF-CBF QP \eqref{opdecay_objective}-\eqref{inputcons2} with proper mathematical transformation, respectively. Then a corollary follows \autoref{theorem:feasibility} as below:
\begin{corollary}\label{conclusion_feas}
Consider  CLF-CBF QP \eqref{QP_model}-\eqref{inputcons}, Optimal-decay CLF-CBF QP \eqref{opdecay_objective}-\eqref{inputcons2} and Safety-first CLF-CBF QP \eqref{op:safetyfirst_1}-\eqref{op:safetyfirst_3}, given that the control constraint $\mathcal{U}$ is a closed convex polytope including the nominal controller $k(x)$, then
\begin{enumerate}
 \item If $\gamma$ in \eqref{CBFcons} is not chosen properly, then there may not exist $\delta$ such that CLF-CBF QP \eqref{QP_model}-\eqref{inputcons} is feasible;
 \item If $h(x)\neq 0$, then there always exists $\delta, \omega$ such that Optimal-decay CLF-CBF QP \eqref{opdecay_objective}-\eqref{inputcons2} is feasible;
 \item Safety-first CLF-CBF QP \eqref{op:safetyfirst_1}-\eqref{op:safetyfirst_3} is feasible at any state $x$.
\end{enumerate}
\end{corollary}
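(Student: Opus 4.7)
The plan is to leverage the unifications in \autoref{lemma1} and \autoref{lemma2} together with the feasibility characterization in \autoref{theorem:feasibility} for the first two claims, and then to argue feasibility of the safety-first framework by exhibiting an explicit feasible point for each nested sub-problem.

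For claim 1, I would appeal to \autoref{lemma1} to identify CLF-CBF QP with SU CLF-CBF QP having $\mathcal{U}_\Delta=\mathbb{R}\times\{0\}$, and then use \autoref{cor:invarsubspace} to obtain $\mathcal{V}_\varepsilon=\mathbb{R}\times\{0\}$. Since $\mathcal{V}_\varepsilon\neq\mathbb{R}^2$, the proof of \autoref{theorem:feasibility} already exposes the failure mode: choosing $\gamma$ large enough pushes $\mathcal{V}_{cbf}$ past $\mathcal{V}$ so that $\|r\|<\|r_0\|$ and $\mathcal{V}_{cbf}\cap\mathcal{V}=\varnothing$. Instantiating this at a concrete state $x$ yields an infeasible instance, establishing the claim.

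For claim 2, I would apply \autoref{lemma1} in the other direction: when $h(x)\neq 0$, Optimal-decay CLF-CBF QP corresponds to $\mathcal{U}_\Delta=\mathbb{R}^2$, so via \autoref{lemma2} and \autoref{cor:invarsubspace} to $\mathcal{V}_\varepsilon=\mathbb{R}^2$, and the first half of \autoref{theorem:feasibility} gives feasibility of the SU form directly. Since $v=S(u-k(x))$ and $\varepsilon=S_\Delta\Delta$ are invertible affine transformations (as $S,S_\Delta\succ 0$), any feasible $(v,\varepsilon)$ pulls back to a feasible triple $(u,\delta,\omega)$ of the original method.

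For claim 3, I would argue inductively over the three nested sub-problems. Sub-problem 1 is trivially feasible: pick any $u_0\in\mathcal{U}$ (nonempty since $k(x)\in\mathcal{U}$), set $\delta_1:=L_fV(x)+L_gV(x)u_0+\lambda V(x)$ and $\delta_2:=L_fh(x)+L_gh(x)u_0+\gamma h(x)$, and both differential constraints hold with equality. Any optimizer $(u_1^*,\delta_{1,1}^*,\delta_2^*)$ of sub-problem 1 is then a feasible point of sub-problem 2 with the now-fixed $\delta_2^*$, so sub-problem 2 is feasible; propagating this one step further yields feasibility of sub-problem 3, whose solution $u^*$ is the returned control input. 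The only subtle point in the whole argument is that \autoref{theorem:feasibility} formally states only that feasibility ``cannot be guaranteed,'' which is weaker than exhibiting an infeasible instance, but its proof via the geometric inequality $\|r\|<\|r_0\|$ is constructive, so a concrete counterexample can be read off; beyond this no further obstacle is anticipated.
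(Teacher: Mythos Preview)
Your proposal is correct and tracks the paper's own reasoning closely: the paper does not write out a separate proof of this corollary, treating it as an immediate consequence of \autoref{lemma1}, \autoref{lemma2}, \autoref{cor:invarsubspace}, and \autoref{theorem:feasibility}, which is exactly how you handle claims 1 and 2. For claim 3 your direct argument (exhibit an explicit feasible point for sub-problem~1 and propagate optimizers forward) is a slightly more elementary alternative to invoking \autoref{theorem:feasibility} on the SU form with $\mathcal{V}_\varepsilon=\mathbb{R}^2$, but it is the same idea unpacked, and the forward-propagation step is needed in either version. One cosmetic remark: your phrase ``choosing $\gamma$ large enough pushes $\mathcal{V}_{cbf}$ past $\mathcal{V}$'' only works verbatim at states with $h(x)<0$; at states with $h(x)>0$ the infeasible configuration in the proof of \autoref{theorem:feasibility} arises instead from $L_{f^*}h(x)$ being sufficiently negative relative to the bounded $\mathcal{V}$, but since the corollary only asserts the \emph{possibility} of infeasibility for an improperly chosen $\gamma$, either instantiation suffices.
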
 
\autoref{fig1b}-\autoref{fig1d} show the typical situations might encountered when using SU CLF-CBF QP. 

\subsection{Safety Analysis}\label{subsec:safety}
As proved in \cite{QP_TAC,theoryandapp_ECC,PlanarQuad_ACC}, if CBF constraint is hard, i.e., $\varepsilon_2=0$ in SU CLF-CBF QP, or $h(x)=0$ in CLF-CBF QP and Optimal-decay CLF-CBF QP, then the solutions of all these QPs guarantee safety of the system. However, it is worth pointing out that the feasibility of QPs, i.e., the existence of control solutions at every time instant, is the prerequisite for safety guarantee. Hence, referring to \autoref{conclusion_feas}, it is known that \textbf{\textit{only Optimal-decay CLF-CBF QP \eqref{opdecay_objective}-\eqref{inputcons2} guarantees the safety when \boldmath${x_0\in \operatorname{Int}\mathcal{(C)}}$}}. 
Referring to  \autoref{def:subsafety}, the  sub-safety of SU CLF-CBF QP is stated below:

\begin{figure}[hbt]
    \centering
        \centering
    \includegraphics[scale=0.7]{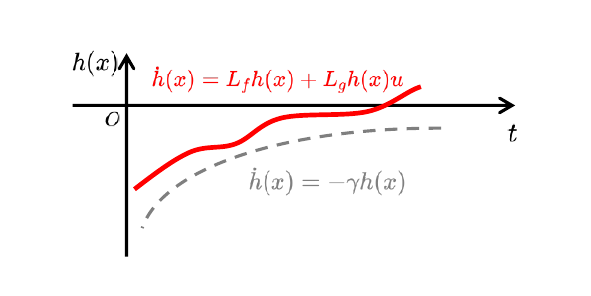}
    \caption{The bound determined by $\gamma$ when $h(x)<0$ at $t=0$.}
    \label{fig4}
\end{figure}

\begin{lemma}\label{theorem:subsafety}
    Assume SU CLF-CBF QP in \eqref{euddistance}-\eqref{final_inputcons} is feasible, then the system is sub-safe if
    \begin{equation}\label{eq_subsafe}
        \varepsilon_2^*>p_2\gamma h(x)
    \end{equation}
    is always satisfied when $h(x)<0$.
\end{lemma}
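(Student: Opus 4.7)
The natural plan is to convert the CBF constraint evaluated at the QP optimum into a differential inequality on $h(x(t))$ along the closed-loop trajectory, and then exploit the hypothesis to force $h$ to climb monotonically out of the unsafe region.

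First I would evaluate \eqref{final_CBFcons} at the optimizer $(v^{*},\varepsilon^{*})$ to obtain
\[
L_{f^{*}} h(x) + L_{g^{*}} h(x)\,v^{*} + \gamma h(x) \;\ge\; \varepsilon_{2}^{*}/p_{2}.
\]
By the construction of $f^{*},g^{*}$ in \autoref{lemma2}, the closed-loop dynamics in the transformed coordinates read $\dot x = f^{*}(x) + g^{*}(x) v^{*}$, so the chain rule identifies $\dot h(x(t)) = L_{f^{*}} h(x) + L_{g^{*}} h(x) v^{*}$. Substituting yields the workhorse differential inequality
\[
\dot h(x(t)) + \gamma h(x(t)) \;\ge\; \varepsilon_{2}^{*}(t)/p_{2}.
\]

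Next I would apply the hypothesis. On every sub-interval where $h(x(t)) < 0$, the condition $\varepsilon_{2}^{*}(t) > p_{2}\gamma h(x(t))$ rearranges to $\varepsilon_{2}^{*}(t)/p_{2} > \gamma h(x(t))$, and inserting this lower bound into the above inequality gives $\dot h(x(t)) > 0$. Hence $t \mapsto h(x(t))$ is strictly increasing along the unsafe portion of the trajectory, so once the level set $\{h = 0\}$ is crossed the trajectory cannot return to the unsafe region. To upgrade this monotonicity to asymptotic entry into $\mathcal{C}$, I would apply a comparison lemma against the benchmark ODE $\dot y = -\gamma y$ with $y(0) = h(x_{0}) < 0$, whose solution $y(t) = h(x_{0}) e^{-\gamma t}$ approaches $0^{-}$ exponentially, as illustrated in \autoref{fig4}; the comparison delivers $h(x(t)) \ge y(t) \to 0$ and hence $x(t) \to \mathcal{C}$ as $t\to\infty$, matching \autoref{def:subsafety}.

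The hard part will be justifying the convergence step, because the hypothesis only yields the \emph{strict} inequality $\dot h > 0$ without a uniform positive lower bound: in principle $h(x(t))$ could approach some $L \in (h(x_{0}),0)$ with $\dot h \to 0^{+}$. The clean way to close the gap is to note that along the unsafe portion of the trajectory the inequality $\dot h + \gamma h \ge \varepsilon_{2}^{*}/p_{2}$ combined with $\varepsilon_{2}^{*}/p_{2} \ge 0$ would give $\dot h \ge -\gamma h$, which is exactly what the exponential comparison needs. Establishing $\varepsilon_{2}^{*}(t) \ge 0$ on the unsafe set is therefore the technical lynchpin, and I expect to handle it by a feasibility construction analogous to the one used in the proof of \autoref{theorem:feasibility}, exhibiting an admissible non-negative choice of $\varepsilon_{2}$ in the inner optimization so that the optimizer inherits non-negativity and the comparison argument goes through.
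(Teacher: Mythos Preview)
Your derivation of $\dot h > 0$ on the unsafe set is exactly the paper's argument, and you are right to flag that this strict pointwise inequality does not by itself force $h \to 0$. The paper handles the convergence step differently and rather informally: it supposes $h(x(t)) \to h_c < 0$, notes that the hypothesis then still yields $\dot h > 0$ near the putative limit, and declares this a contradiction with $h \to h_c$. That step is hand-wavy, but observe that it never invokes $\varepsilon_2^{*} \ge 0$.

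The genuine gap in your proposal is the ``technical lynchpin''. In the SU CLF-CBF QP the objective \eqref{euddistance} minimizes $v^{T}v + \varepsilon_1^{2} + \varepsilon_2^{2}$ \emph{jointly}, so exhibiting a feasible point with $\varepsilon_2 = 0$ does not make the optimizer inherit non-negativity. Quite the opposite: for any fixed $v$ the constraint \eqref{final_CBFcons} reads $\varepsilon_2 \le p_2\bigl(L_{f^{*}}h + L_{g^{*}}h\,v + \gamma h\bigr)$, so the best $\varepsilon_2$ at that $v$ is always $\le 0$, and pushing $\varepsilon_2$ further negative loosens the constraint on $v$ and can purchase a smaller $\|v\|$ at the cost of a larger $\varepsilon_2^{2}$. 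Thus $\varepsilon_2^{*} \le 0$ in general, and the lemma's hypothesis $\varepsilon_2^{*} > p_2\gamma h(x)$ with $h(x)<0$ deliberately allows $\varepsilon_2^{*}$ to sit strictly between $p_2\gamma h(x)$ and $0$. (Were $\varepsilon_2^{*} \ge 0$ automatic, the hypothesis would hold vacuously and the follow-up corollary's assertion that Optimal-decay CLF-CBF QP can violate sub-safety would be empty.) Consequently $\dot h \ge -\gamma h$ does not follow from the stated assumption, and your comparison with $y(t) = h(x_0)e^{-\gamma t}$ collapses. If you want a rigorous convergence argument you must work directly with $\dot h > 0$ and whatever uniformity the hypothesis actually supplies, not a sign condition on $\varepsilon_2^{*}$ that the QP structure does not deliver.
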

\begin{proof}
    If \eqref{eq_subsafe} is satisfied, it follows from \eqref{final_CBFcons} 
    \begin{align*}
        &L_{f^*} h(x)+L_{g^*} h(x) v+\gamma h(x) \geq \frac{\varepsilon_2^*}{p_2}\\
        \Rightarrow\quad&L_{f^*} h(x)+L_{g^*} h(x) v \geq \frac{\varepsilon_2^*}{p_2}-\gamma h(x)> 0\\
        \Rightarrow\quad&\dot{h}>0
    \end{align*}

    Assume $h(x)<0$ at $ t_0 $, then $\dot{h}>0$ and it has upper bound $0$, so there exists $h_c$ is the limit of $h(x)$, i.e., $h(x)\rightarrow h_c$ as $ t \rightarrow \infty $. However, since $\dot{h}>0$, there exists a neighborhood satisfies $h>h_c$, which conflicts with $h(x)\rightarrow h_c$ if $h_c<0$. So $h_c=0$, which means the system is sub-safe.
\end{proof}

\begin{corollary}
    CLF-CBF QP \eqref{QP_model}-\eqref{inputcons} always guarantees sub-safety of the system if it is feasible, but Optimal-decay CLF-CBF QP \eqref{opdecay_objective}-\eqref{inputcons2} does not; For Safety-first CLF-CBF QP, if the optimal solution to sub-problem 1 \eqref{op:safetyfirst_1} is $\varepsilon^*_2=0$, then it always guarantees sub-safety of the system, otherwise, it violates the sub-safety with a minimal relaxation of the CBF constraints by the slack variable $\varepsilon^*_2$.
\end{corollary}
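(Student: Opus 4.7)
The plan is to apply \autoref{theorem:subsafety} method-by-method, exploiting the dictionary $\varepsilon_2 = p_2\delta_2$ from \autoref{lemma2} together with the explicit characterisations of the slack-variable set $\mathcal{V}_\varepsilon$ collected in \autoref{tab:diffslackcons}. Sub-safety then reduces to checking whether the inequality $\varepsilon_2^* > p_2\gamma h(x)$ (equivalently $\delta_2^* > \gamma h(x)$) holds on the unsafe region $\{h(x)<0\}$, so the three methods need only be distinguished by the feasible range of $\delta_2$ and the value that the respective optimiser assigns to it.

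For CLF-CBF QP, \autoref{tab:diffslackcons} pins $\delta_2\equiv 0$. On $\{h(x)<0\}$ we have $\gamma h(x)<0=\delta_2^*$ because $\gamma>0$, so the hypothesis of \autoref{theorem:subsafety} holds for free whenever the QP is feasible. For Optimal-decay CLF-CBF QP with $h(x)\neq 0$, the proof of \autoref{lemma1} exposes $\delta_2$ as an unconstrained real (via $\delta_2=(\omega_0-\omega)\gamma_0 h(x)$), and the penalty $p_\omega(\omega-\omega_0)^2$ neither rewards $\delta_2=0$ nor precludes $\delta_2^*<\gamma h(x)$. I would support the non-guarantee claim by invoking the motivating example of \autoref{fig3}, in which the vehicle is actively driven into collision despite $h<0$, combined with the structural remark that nothing in the Optimal-decay objective enforces the sub-safety bound.

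For Safety-first CLF-CBF QP, Sub-problem~1 in \eqref{op:safetyfirst_1} is exactly $\min \delta_2^2$ over $(u,\delta_1,\delta_2)$ subject to the two barrier-type inequalities and $u\in\mathcal{U}$. Since $\delta_1$ is unconstrained from above, the feasible projection onto the $\delta_2$-axis collapses to $(-\infty,M]$ with $M:=\max_{u\in\mathcal{U}}\bigl[L_f h(x)+L_g h(x)u+\gamma h(x)\bigr]$. When $M\geq 0$ the minimiser of $\delta_2^2$ is $\delta_2^*=0$, the original hard CBF constraint is realisable, and the CLF-CBF QP argument above delivers sub-safety via \autoref{theorem:subsafety}. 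When $M<0$, $\delta_2^*=M<0$: no admissible control satisfies the unrelaxed CBF inequality, and $|\delta_2^*|$ is by construction the infimum of all relaxations that restore feasibility, which is precisely the ``minimal relaxation'' promised by the statement.

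The main obstacle I anticipate is phrasing the Optimal-decay part correctly: I want to claim non-guarantee, not universal failure, so I would present it as an existence statement backed by the \autoref{fig3} counterexample rather than a pointwise bound. The ``minimality'' half of the Safety-first claim needs only the definition of Sub-problem~1 and a one-line use of the fact that any $\delta_2 > M$ is infeasible; no auxiliary machinery is required. Everything else is routine substitution into \autoref{theorem:subsafety}.
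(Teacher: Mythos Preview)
Your proposal is correct and follows essentially the same approach as the paper: reduce each method to its $\varepsilon_2$ (equivalently $\delta_2$) value, then check the sub-safety criterion of \autoref{theorem:subsafety} on $\{h(x)<0\}$. Your Safety-first argument is in fact more explicit than the paper's (you characterise the feasible $\delta_2$-range as $(-\infty,M]$ and compute the minimiser case-by-case), and your use of the \autoref{fig3} counterexample for Optimal-decay is a slight elaboration on the paper's bare ``$\varepsilon_2$ can vary over $\mathbb{R}$ so the bound may be violated,'' but the logical skeleton is identical.
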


\begin{proof}
When $h(x)<0$, $\gamma h(x)<0$ holds for $\forall \gamma>0$, so if in CLF-CBF QP, i.e., the case of $\varepsilon_2=0$ in SU CLF-CBF QP, \eqref{eq_subsafe} is always satisfied, and there is also a lower bound of $h$, shown as \autoref{fig4}. 

In Optimal-decay CLF-CBF QP,  $\varepsilon_2$ can vary from $-\infty$ to $+\infty$ and hence it is possible to violate \eqref{eq_subsafe}, losing guarantees of sub-safety.

In Safety-first CLF-CBF QP, if $\varepsilon_2=0$ is feasible, then $\varepsilon_2^*=0$, which guarantees the sub-safety. if $\varepsilon_2=0$ is infeasible, then $\varepsilon_2^*$ is the minimal-slack of CBF constraint, guarantees the sub-safety as much as possible.
\end{proof}

 
To summarize, our Safety-first CLF-CBF QP is always feasible and guarantee safety and sub-safety as much as possible, which cannot achieve by exiting  methods.

\begin{table*}[hbtp]
\caption{Comparisons of different methods in terms of feasibility, safety/sub-safety and convergence}
\centering
\label{tab:comparison 2}
\begin{tabular}{ccccccc}
\toprule
\multirow{2}{*}{\textbf{Methods}} &\multicolumn{2}{c}{\textbf{Feasibility}} &\multicolumn{2}{c}{\textbf{Safety/Sub-safety}} &\multirow{2}{*}{\textbf{Convergence}}\\
&$h(x)\neq0$&$h(x)=0$&$h(x)\geq0$&$h(x)<0$\\
\midrule
Hard CLF-CBF QP&&&\checkmark&\checkmark&\checkmark\\ 
CLF-CBF QP&&&\checkmark&\checkmark& \\
Optimal-decay CLF-CBF QP&\checkmark&&\checkmark&\\
\textbf{Safety-first CLF-CBF QP}&\checkmark&\checkmark&\checkmark&\checkmark&\checkmark \\
\bottomrule
\end{tabular}
\end{table*}

\subsection{Convergence degradation}\label{subsec:convergence}
The hyper-parameters $\gamma$ in the CBF condition \eqref{CBFperfect} and  $\lambda$ in the CLF condition \eqref{CLFperfect} specify exponential functions with decay rate  $\gamma$ and $\lambda$ as the boundaries, as shown in \autoref{fig5a} and \autoref{fig5b}.  However, the introduced slack variables, such as $\delta_1$ and $\delta_2$ in Unified CLF-CBF QP \label{CLFcons3}-\eqref{CBFcons3}, which aims to solving potential infeasibility could change the decay rates different from pre-specified $\gamma$ or $\lambda$. We call such a phenomenon \textit{\textbf{convergence degradation}}. In this subsection, we will answer the following two questions: \begin{enumerate}
    \item What is the reason for convergence degradation in existing CLF-CBF QP methods?
    \item How will the proposed Safety-First CLF-CBF QP method perform in terms of convergence degradation?
\end{enumerate}

\begin{figure}
    \centering
    \subfigure[]{
    \vspace{-0.4cm}
    \label{fig5a}
    \includegraphics[scale=0.65]{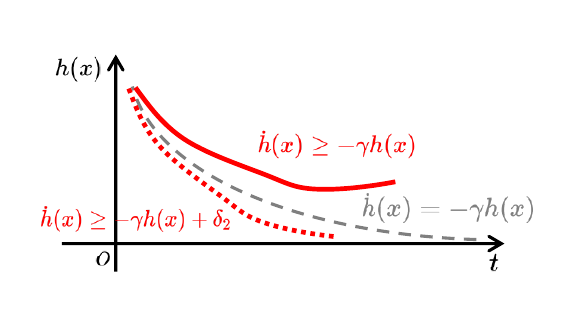}}
    \subfigure[]{
    \vspace{-0.4cm}
    \label{fig5b}
    \includegraphics[scale=0.65]{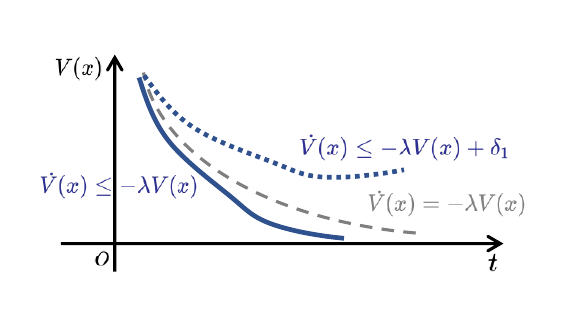}}
    \label{fig5}
    \caption{Convergence degradation caused by slack variables $\delta_1,\delta_2$.}
\end{figure}

To answer the first question, let us first consider a toy example $$ \min \quad f(x)=x^2, \quad 1\leq x \leq 2.$$
By relaxing constraint as $x- 1\geq \varepsilon$, the original problem can be reformulated as:  
\begin{equation*}
    \begin{aligned}
    \min \quad&f(x,\varepsilon)=x^2+10\varepsilon^2\\
    \textbf{s.t.}\quad
    &x- 1\geq \varepsilon\\
    &x-2\leq 0
    \end{aligned}    
\end{equation*}
where $x$ is the main variable and $\varepsilon$ is a slack variable added to guarantee feasibility. $x=1,\varepsilon=0$ is the no-slacked optimal solution with $f(x,\varepsilon)=1$, but it is not the optimal solution of the problem with slack variable $\varepsilon$ to be optimized, since $x=0.99,\varepsilon=-0.01$ provides a better solution with $f(0.99,-0.01)=0.9811<f(1,0)$. From this example, we can find that the introduced slack variable in the optimization objective function could lead to a shift in the optimal solution at the expense of a non-zero slack variable.

Inspired by the observation, we make the following statement on convergence degradation.
 
\begin{proposition}\label{theorem:degrade}
Given that $\left[\begin{array}{cc}v_2^T  & \varepsilon_*^T\end{array}\right]^T$ is the optimal solution of SU CLF-CBF QP \eqref{euddistance}-\eqref{final_slackcons}, and $\left[\begin{array}{ccc}v^T &\varepsilon^T\end{array}\right]^T$ as a feasible solution of the same problem with $\varepsilon=\left[\begin{array}{cc}0  &0 \end{array}\right]^T$, then usually $v_1\neq v_2$ and $\varepsilon_*\neq \varepsilon$.
\end{proposition}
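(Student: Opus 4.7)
The plan is to argue by first-order optimality (KKT) and show that the no-slack feasible point $[v^T,0,0]^T$ generically violates the KKT conditions of the SU CLF-CBF QP, so the true optimum must lie elsewhere. Since SU CLF-CBF QP is a strictly convex QP (the objective is $\tfrac12\|v\|^2+\tfrac12\|\varepsilon\|^2$ and the feasible set is polyhedral), the KKT conditions are both necessary and sufficient, and the optimizer is unique.

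First, I would write down the Lagrangian of the SU CLF-CBF QP using the CLF constraint \eqref{final_CLFcons} and the CBF constraint \eqref{final_CBFcons} (with input-set multipliers bundled separately), and read off the stationarity conditions. The derivatives with respect to $\varepsilon_1,\varepsilon_2$ give
\begin{equation*}
\varepsilon_1=\mu_1/p_1,\qquad \varepsilon_2=-\mu_2/p_2,
\end{equation*}
where $\mu_1,\mu_2\ge 0$ are the multipliers of the CLF and CBF constraints, respectively. Hence $\varepsilon_*=0$ is possible only when both multipliers vanish, which (by stationarity in $v$) forces $v_2$ to be the unconstrained minimizer of $\tfrac12\|v\|^2$ on $\mathcal V$, i.e.\ $v_2=0$. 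Since $0\in\mathcal V$ (by \autoref{lemma2}), this happens exactly when the nominal control $k(x)$ already satisfies both the CLF and CBF inequalities; excluding this trivial regime, at least one multiplier is strictly positive and therefore at least one component of $\varepsilon_*$ is non-zero, giving $\varepsilon_*\neq 0=\varepsilon$.

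Next, I would compare $v_2$ against a no-slack feasible $v$. Take any $v$ with $\varepsilon=0$ lying on an active constraint boundary (the hard-problem optimum sits at the projection of the origin onto the intersection of the half-spaces and $\mathcal V$). The KKT stationarity for the relaxed problem reads
\begin{equation*}
v_2 + \mu_1\bigl(L_{g^*}V(x)\bigr)^T - \mu_2\bigl(L_{g^*}h(x)\bigr)^T = 0,
\end{equation*}
so whenever $\mu_i>0$ (the generic case identified above), substituting $\varepsilon_i=\pm\mu_i/p_i$ into the corresponding constraint, which holds with equality at the optimum, yields a different linear system than the hard problem (where $\varepsilon_i=0$). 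By strict convexity, its unique solution $v_2$ differs from the hard-problem solution $v$ unless the corresponding slack coefficient satisfies $1/p_i\to 0$. This is precisely the limit captured by \autoref{theorem:limitweight}, and formalizes the toy-example intuition: allowing $\varepsilon_i\neq 0$ lets one shave $\tfrac12\|v\|^2$ at a second-order cost in $\varepsilon_i^2$, which is always profitable to first order when the active-constraint gradient is not orthogonal to $v$.

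The main obstacle I anticipate is making the word \emph{usually} precise. The degenerate situations to rule out are (i) the constraints being inactive at $v$, which forces $v=0$ and collapses the comparison (nominal control already safe and stabilizing), and (ii) the active constraint normals $L_{g^*}V$, $L_{g^*}h$ being exactly aligned with $v$ in a way that makes the KKT system's solution invariant under perturbation of $\varepsilon$ — a measure-zero alignment in parameter space. A clean way to finish is therefore to state the conclusion as an equivalence: $v_2=v$ and $\varepsilon_*=0$ \emph{if and only if} the hard CLF-CBF QP's KKT multipliers satisfy $\mu_i=0$ for every index whose slack is free in $\mathcal V_\varepsilon$, and otherwise strict inequality holds. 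This bypasses the need to characterize all pathological cases and matches both the toy calculation and the geometric picture in \autoref{figs:1a-1d}.
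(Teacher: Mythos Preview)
Your KKT-based argument is sound, and in fact it is considerably more rigorous than what the paper offers. The paper does not give a formal proof of this proposition at all: it is preceded by a one-dimensional toy example ($\min x^2$ subject to $1\le x\le 2$, relaxed to $x-1\ge\varepsilon$) showing numerically that the relaxed optimum shifts, and it is followed by a reference to \autoref{figs:6a-6b} for geometric intuition. The word ``usually'' in the statement reflects this informal treatment.

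What your approach buys is an actual characterization of the non-degenerate set: your stationarity relations $\varepsilon_1=\mu_1/p_1$ and $\varepsilon_2=-\mu_2/p_2$ make precise that $\varepsilon_*=0$ occurs exactly when both CLF/CBF multipliers vanish, which in turn forces $v_2=0$ (since $0\in\mathcal V$), i.e.\ the nominal controller already satisfies both inequalities. This is a cleaner statement than the paper's ``usually'', and your closing equivalence (degradation occurs iff some free-slack index has a strictly positive multiplier in the hard problem) is the right way to dispose of the vagueness. The paper's toy example and picture, on the other hand, are easier to digest and communicate the mechanism (trading second-order cost in $\varepsilon$ for first-order gain in $\|v\|^2$) without any optimization machinery. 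Both arrive at the same conclusion; yours is a proof, the paper's is an illustration.

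One small caveat: in your stationarity equation for $v_2$ you drop the input-constraint multipliers. This is harmless for the conclusion $\varepsilon_*\neq 0$ (which depends only on the $\varepsilon$-stationarity), but when you argue $v_2\neq v$ you should either assume the same input-constraint faces are active in both problems or note explicitly that a change in active input facets already implies $v_2\neq v$.
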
 

\autoref{figs:6a-6b} provides a geometric visualization to explain convergence degradation.
\begin{figure}[htbp]
    \centering
\subfigure[]{
\vspace{-0.4cm}
\label{fig6a}
\includegraphics[scale=0.32]{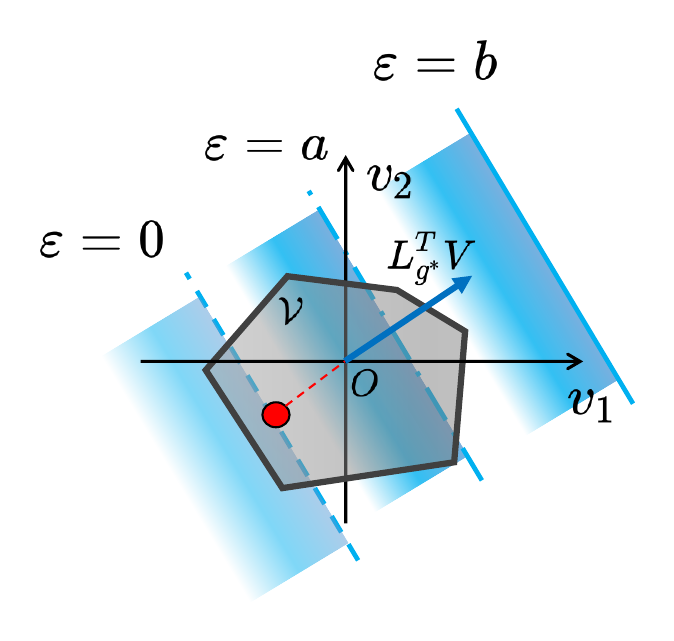}}
\subfigure[]{
\vspace{-0.4cm}
\label{fig6b}
\includegraphics[scale=0.24]{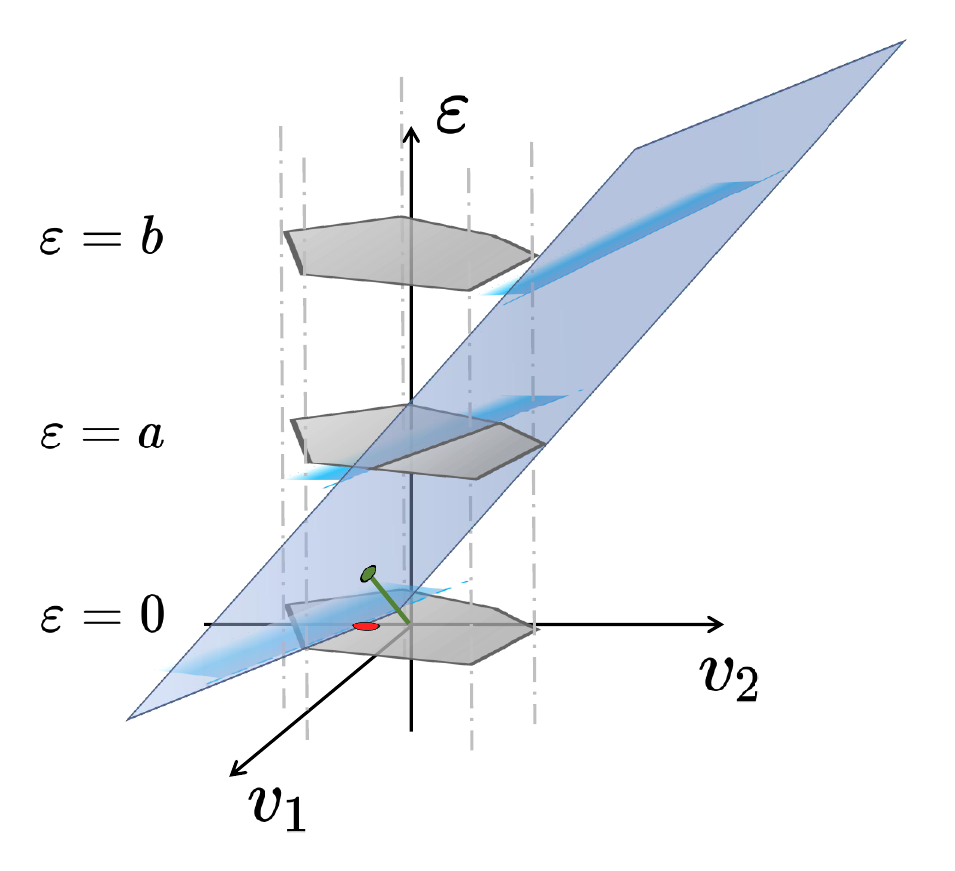}}
\caption{An geometric explanation of convergence degradation. (a) The optimal solution with $\varepsilon=0$ (the red point), and different values of slack variable $\varepsilon$ determine different feasible regions
. (b) the optimal solution when the relaxed variable $\varepsilon$ is also optimized simultaneously. The optimal solution (the green point) is the point with minimal distance from origin, which is usually not identical to the red point.}
\label{figs:6a-6b}
\end{figure}

\begin{figure}
    \centering
\includegraphics[scale=0.25]{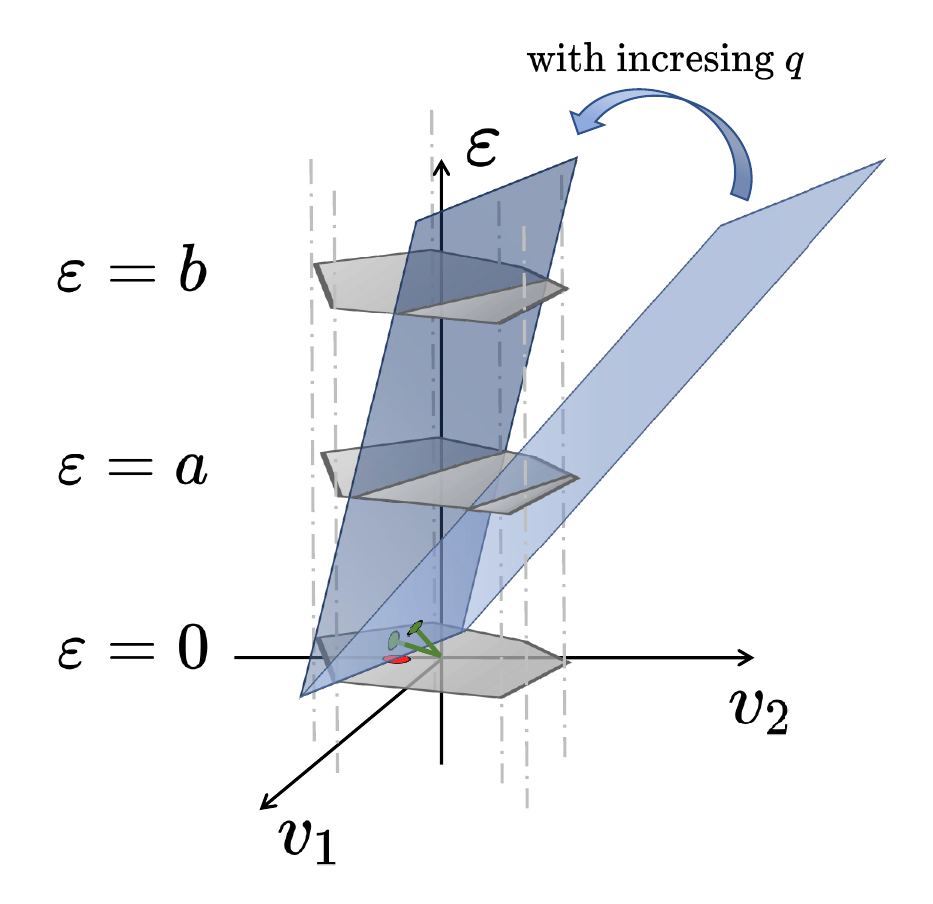}
    \caption{Weight $q$ influenced the slope of boundary plane in Safety-first CLF-CBF QP. A bigger weight $q$ leads the boundary plane (blue) steeper. As $q\to\infty$, the plane is almost vertical, resulting in the yellow point coincide with red point, i.e., the optimal solution is not slacked.}
    \label{fig:7}
\end{figure}

With regard to the second question, our safety-first approach can avoid convergence degradation, and an explanation is given as follows. 


It follows from \autoref{theorem:limitweight} that Safety-first CLF-CBF QP is equivalent to Unified CLF-CBF QP \eqref{op:limitweight} with $q \to +\infty$. From geometric illustration in \autoref{fig:7}, the larger $q$ results in steeper boundary hyperplane. Naturally, when the weight is approaching infinity, the hyperplane is almost "vertical". Referring to \autoref{theorem:degrade}, if $\varepsilon=0$ is a feasible solution, then $\varepsilon^*=0$, i.e., convergence degradation disappears. 

To summarize, our Safety-first CLF-CBF QP ensure expected decay rates when they are achievable. Comparison between our Safety-first CLF-CBF QP and the existing methods are summarized in \autoref{tab:comparison 2}.

\begin{table*}[h]
\caption{settings of four scenarios}
\vspace{-0.4cm}
\label{tab:ACC_restpara}
\begin{center}
\begin{tabular}{lcccccc}
    \toprule
    &Parameters&Case 1 &Case 2&Case 3&Case 4\\
    \midrule
    &$s_0$&$[0,20,100]^T$&$[0,20,100]^T$&$[0,20,20]^T$&$[0,20,20]^T$&\\
    &$v_d$ (m/s)&10&24&10&24\\
    &Remark&Initial safe and $v_d<v_0$&Initial safe but $v_d>v_0$&Initial unsafe but $v_d<v_0$&Initial unsafe and $v_d>v_0$\\
    \bottomrule
    \vspace{-1cm}
\end{tabular}
\normalsize
\end{center}
\end{table*}
\begin{figure*}[h]
    \centering
\subfigure[Case 1]{
\includegraphics[scale=0.14]{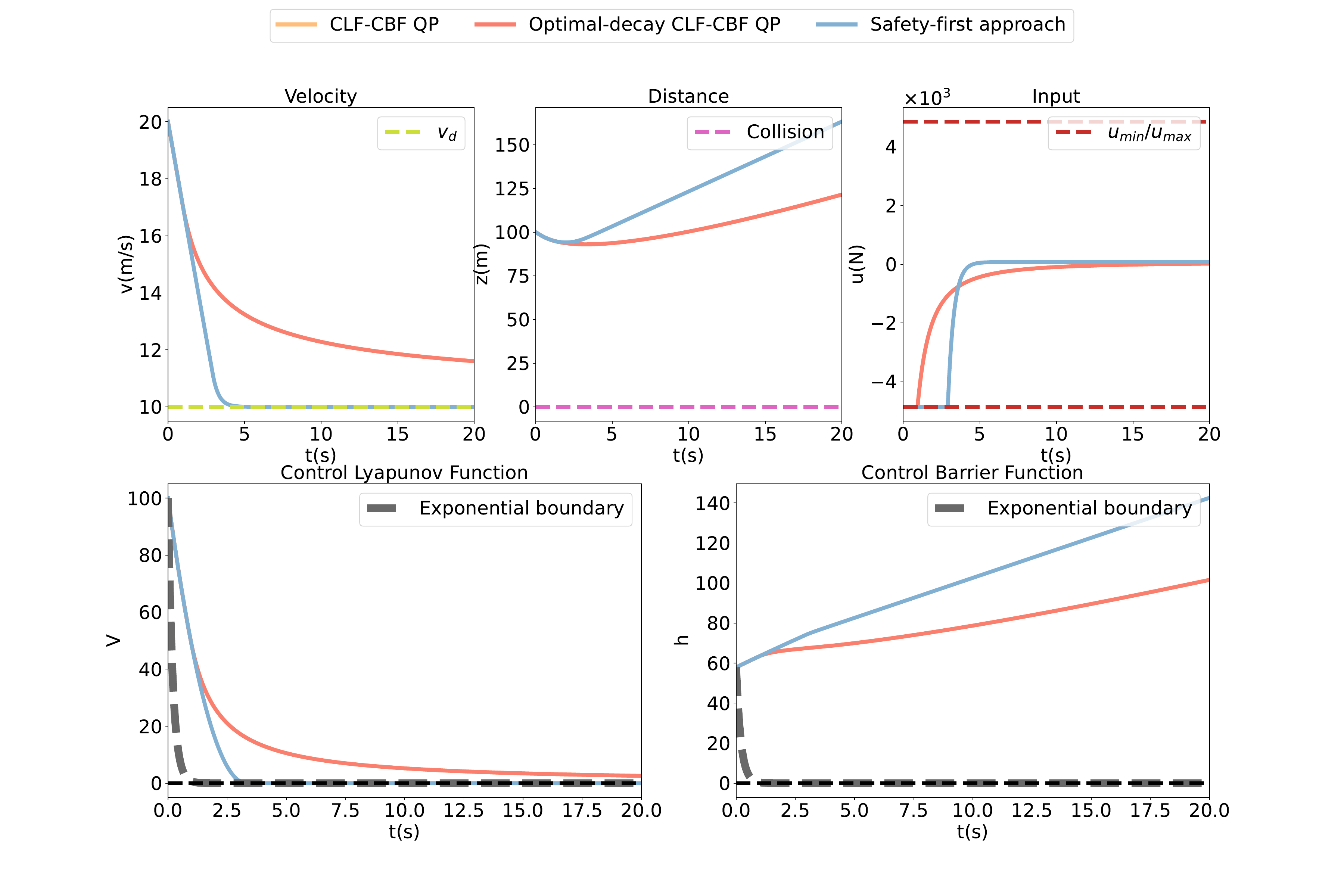}
    \label{fig:ACC_case1}}
\subfigure[Case 2]{
\includegraphics[scale=0.14]{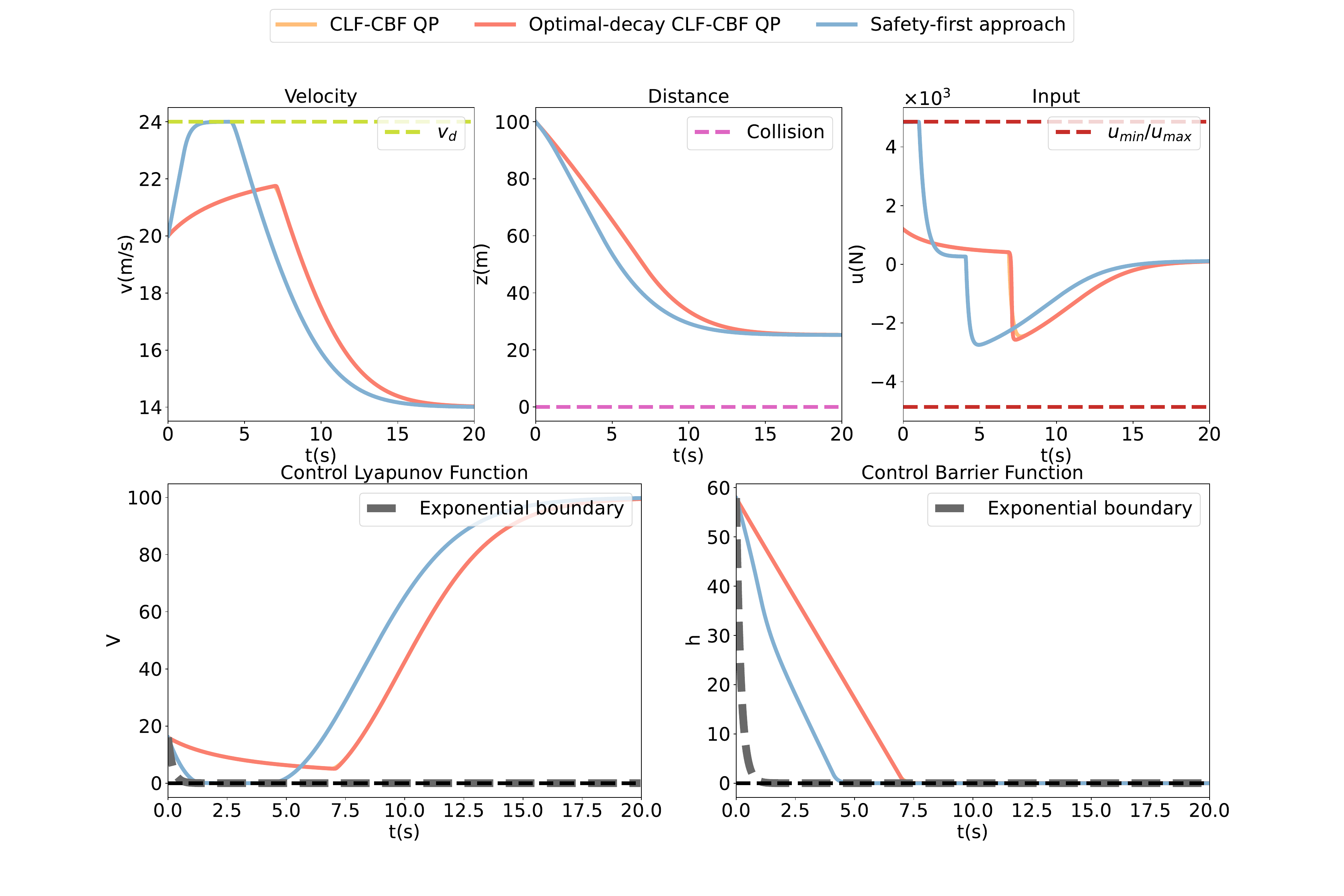}
    \label{fig:ACC_case2}}
\subfigure[Case 3]{
\includegraphics[scale=0.14]{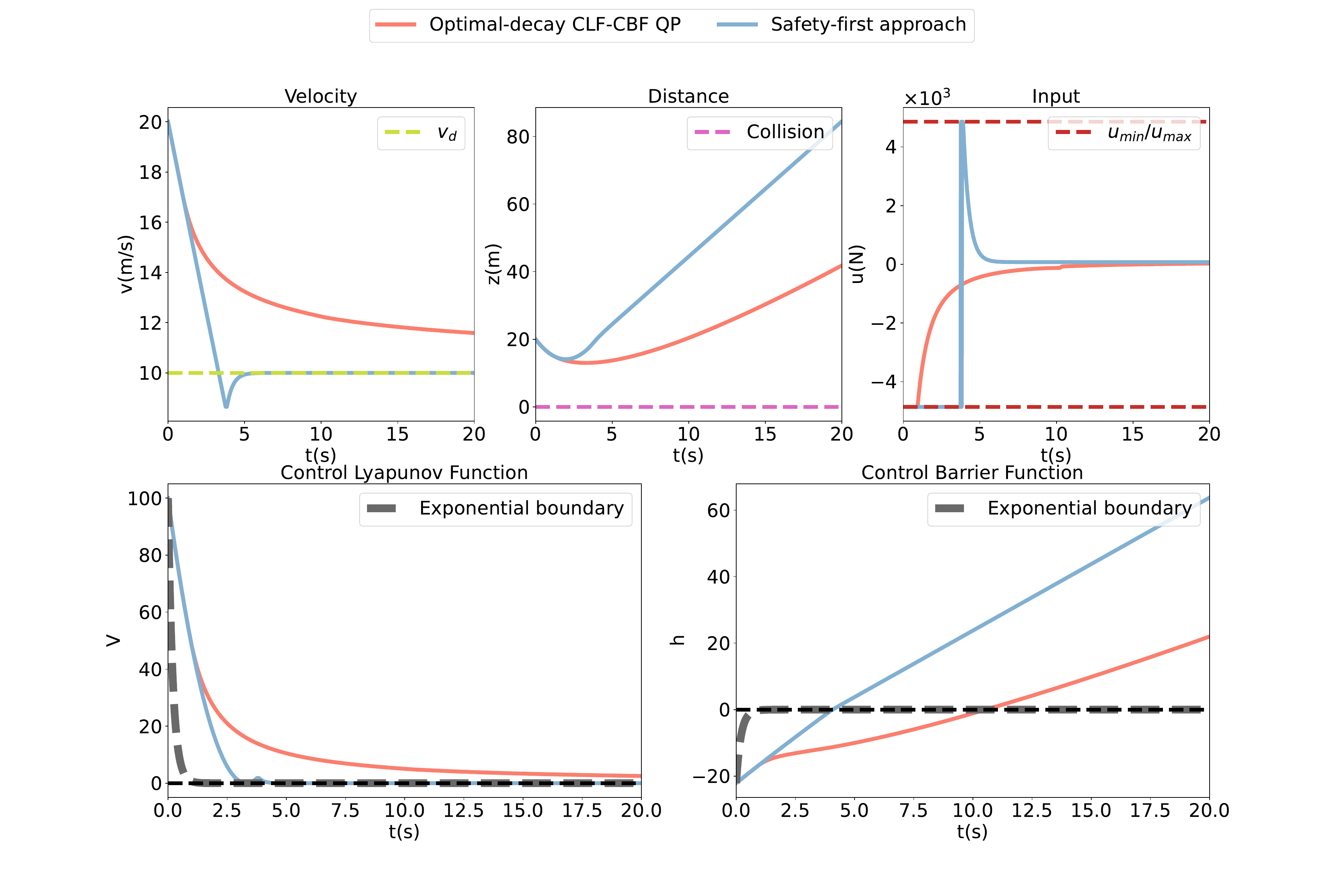}
    \label{fig:ACC_case3}}
\subfigure[Case 4]{
\includegraphics[scale=0.14]{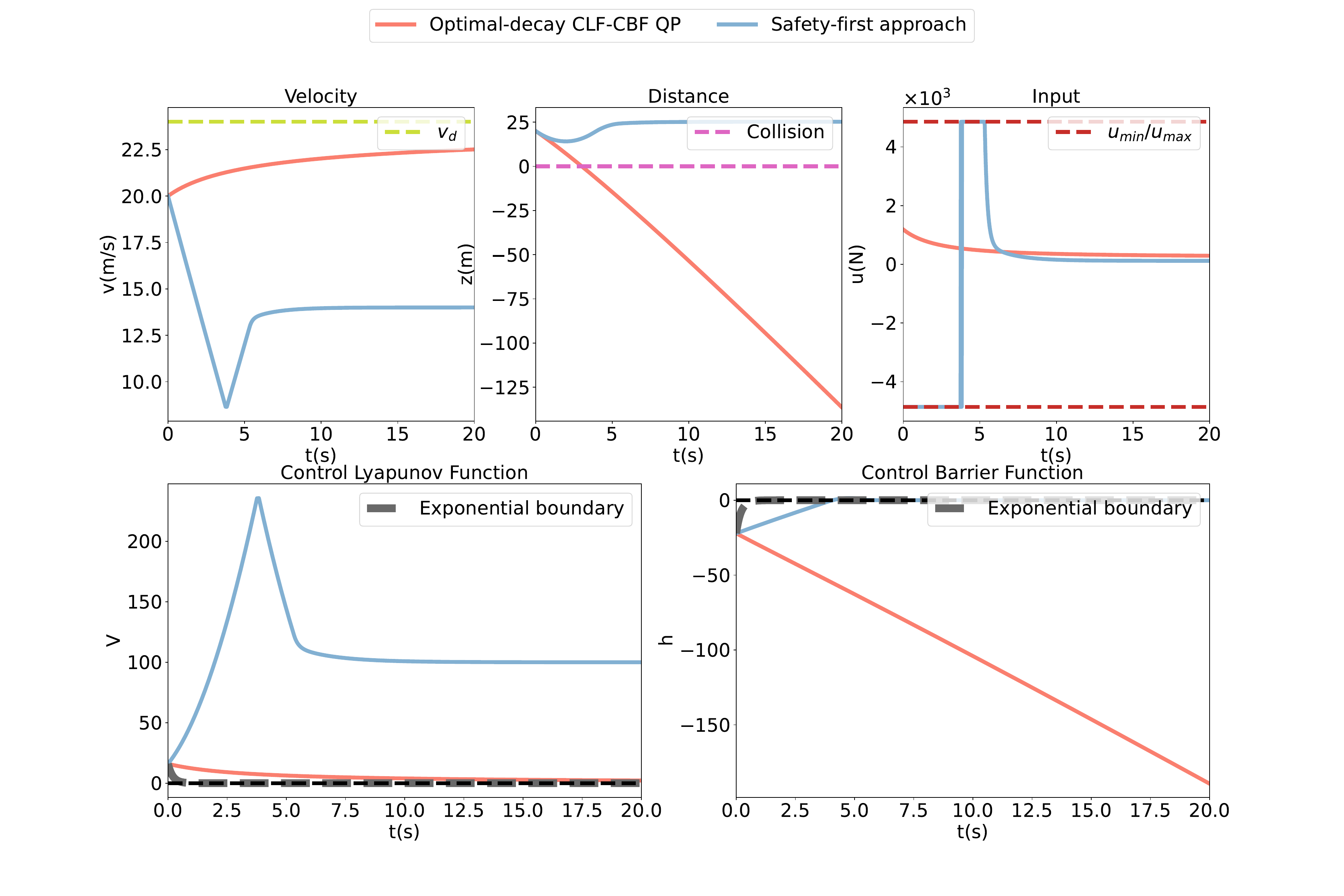}
    \label{fig:ACC_case4}}
    \caption{The results in ACC example. (a)Initial safe and $v_d<v_0$. (b)Initial safe but $v_d>v_0$. (c) Initial unsafe but $v_d<v_0$. (d) Initial unsafe and $v_d>v_0$. }
\label{fig:ACC_cases}
\end{figure*}

\section{Simulation Results}\label{sec:simulation}

To verify the analysis of existing QP-based methods and our proposed safety-first approach, we conduct comparative experiments using two examples \textbf{Adaptive Cruise Control} \cite{acc_CDC} and \textbf{AGV Collision Avoidance} \cite{clf-cbf-helper}.

\subsection{Adaptive Cruise Control}\label{subsec:acc}
\subsubsection{Problem Setup}

Consider the scene that an ego vehicle moves along the one-lane road with a desired velocity $v_d$, and a lead vehicle drive at a speed of $v_0$ in front. Then the system model is formulated as below:
\begin{table}[hbtp]
\caption{Part of parameters in ACC simulations}
\vspace{-0.4cm}
\label{tab:ACC_fixedpara}
\begin{center}
\begin{tabular}{lcccccc}
    \toprule
    &Parameters&Value\\
    \midrule
    &$m$ (kg) &$1650$\\
    &$g$ (m/s$^2$) &$9.81$\\
    &$T_h$ (s)&$1.8$\\
    &$v_0$ (m/s)&14\\
    &$f_0,f_1,f_2$&$0.1,\,5,\,0.25$\\
    &$c_a,c_d$&$0.3,\,0.3$\\
    &$\lambda$&$5$\\
    &$\gamma$&$5$\\
    &$H$&$\frac{2}{1650^2}$\\
    &$p$&$2\times 10^{-3}$\\
    &$\omega_0$&1\\
    &$p_\omega$&0.2\\
    &Control frequency (Hz)&50\\
    \bottomrule
    \vspace{-1cm}
\end{tabular}
\normalsize
\end{center}
\vspace{0.4cm}
\end{table}

\begin{equation}
    \dot{s}=\left[\begin{array}{c}
v \\
-\frac{1}{m} F_{r}(v) \\
v_{0}-v
\end{array}\right]+\left[\begin{array}{c}
0 \\
\frac{1}{m} \\
0
\end{array}\right] u
\end{equation}
where $s=\left[\begin{array}{ccc}
    p&v&z
\end{array}\right]^T$, $p,v$ are the position and velocity of the ego vehicle, respectively, and $z$ is the distance between the lead vehicle and the ego vehicle, $m$ is the mass of the ego vehicle, and $F_r(v)$ is the friction modelled as $F_r(v)=f_0+f_1v+f_2v^2$. The input of the ego vehicle $u$ which represents accelerator or brake is subject to
\begin{equation}
    -mc_dg\leq u \leq mc_ag
\end{equation}

Given a nominal controller of the ego vehicle $k(x)=F_r(v)$, we aim to achieve the desired velocity $v_d$ and keep a safe distance from the lead vehicle. The CLF and CBF functions are designed as follows:
\begin{equation}
    V(x)=(v-v_d)^2
\end{equation}
as a CLF and 
\begin{equation}
    h(x)=z-T_{h} v-\frac{1}{2} \frac{\left(v-v_{0}\right)^{2}}{c_{d} g}
\end{equation}
as a CBF, where $T_h$ is the look-ahead time.

We compare the results obtained from CLF-CBF QP, Optimal-decay CLF-CBF QP and Safety-first approach. Depending on the conditions whether the initial state is safe, i.e., $h(s_0)>0$ or not, and whether the initial speed is bigger than the desired speed, i.e., whether $v_0>v_d$ or not, four different scenarios are designed and the corresponding settings are summarized in \autoref{tab:ACC_restpara}. The hyper-parameters are fixed as shown in  \autoref{tab:ACC_fixedpara} in our simulations.

\subsubsection{Results Analysis}

As shown in \autoref{fig:ACC_case1}-\autoref{fig:ACC_case2}, 
in scenarios 1 and 2, feasible control solutions exist all the time and guarantee the safety, verifying our analysis in \autoref{subsec:safety} that all the methods are safety-guaranteed and the formulated optimization problems are feasible when $h(x)>0$. But there still exist  some differences:  
CLF decreases and CBF increases faster in the safety-first approach than that in other CLF-CBF QP methods. It verifies our analysis in \autoref{subsec:convergence}: compared with safety-first CLF-CBF QP, the introduced slack variables in both CLF-CBF QP and Optimal CLF-CBF QP degrade convergence rates.

\begin{figure}[hbtp]
    \centering
\includegraphics[scale=0.12]{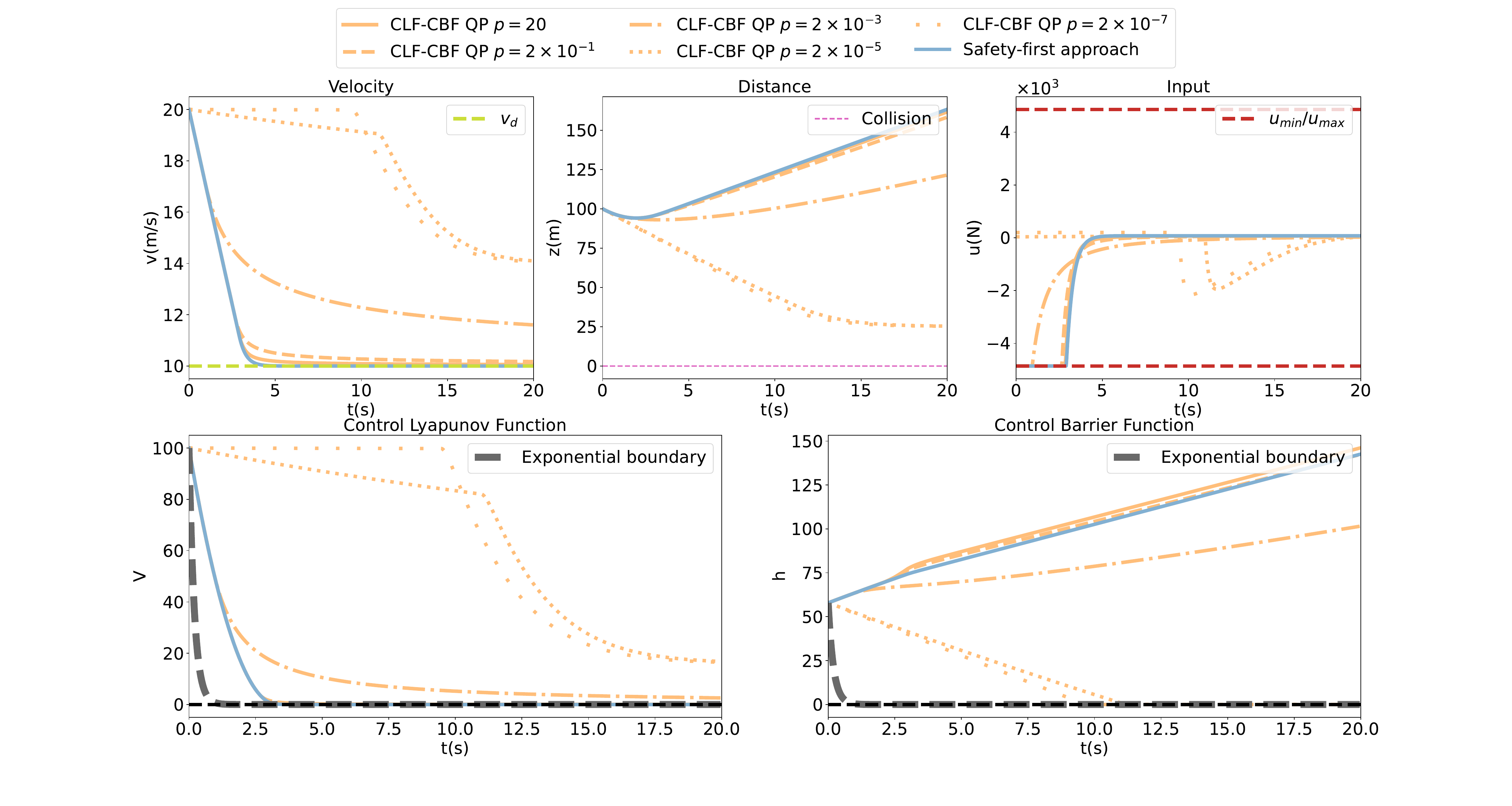}
    \caption{The result of the extended simulation. All CLF functions are above the black border determined by $\lambda$, which means CLF constraint is slacked in all approaches. As $p$ increases, CLF value decreases fast, and the ones of Safety-first CLF-CBF QP (blue) are the lower bounds.}
    \label{fig:ACC_extended}
\end{figure}

In scenarios 3 and 4, CLF-CBF QP is feasible only at the start, and then as the hard CBF constraint soon conflicts with input constraints, the CLF-CBF QP become infeasible since then. In contrast, Optimal-decay CLF-CBF QP and safety-first approach are feasible all the time, but Safety-first CLF-CBF QP yields faster CLF decrease and CBF increase. Particularly,  in scenario 4, the solution to Optimal-decay CLF-CBF QP leads to a constant decrease in CBF and hence collisions during the process, which verifies our analytical results in \autoref{subsec:safety}, that is Optimal-decay CLF-CBF QP can not guarantee the sub-safety of the system whereas safety-first approach can.

Furthermore, to verify our analysis results on convergence degradation in \autoref{subsec:convergence}, we implement both Safey-first CLF-CBF QP and CLF-CBF QP with varying weighting coefficient $p$ for Scenario 1. As shown in \autoref{fig:ACC_extended}, as the value of $p$ increases, the decay rate of CLF in CLF-CBF QP increases as well, approaching the decay rate of CLF in the safety-first approach as the limit.

\subsection{AGV Collision Avoidance }\label{subsec:2DDI}
\subsubsection{Problem Setup}\label{sub2sec:2DDIproblem}
The 2D double integrator is a simplified model of an AGV ignoring its geometric shape and physical dynamics. The system model is given below:
\begin{equation}
    s=\left[\begin{array}{c} x\\y\\v_x\\v_y
    \end{array}\right],\dot{s}=\left[\begin{array}{cccc}
    0&0&1&0\\
    0&0&0&1\\
    0&0&0&0\\
    0&0&0&0
    \end{array}
    \right]s+\left[\begin{array}{cc}
    0&0\\
    0&0\\
    1&0\\
    0&1
    \end{array}\right]u
\end{equation}
where $u=[u_1\,\, u_2]^T\in \mathbb{R}^2$, $p=[x\,\, y]^T$ is the position and $v=[v_x\,\,v_y]$ are the velocity along $x$ and $y$ respectively. 

\begin{table}[hbtp]
\caption{Parameters in AGV simulations}
\vspace{-0.4cm}
\label{tab:2DDI_para}
\begin{center}
\begin{threeparttable} 
\begin{tabular}{lcccccc}
    \toprule
    &Parameters&Value\\
    \midrule
    &$s_0$&$[0,4,0,0]^T$\\
    &$p_d$&$[10,0]$\\
    &$p_o$&$[5,3]$\\
    &$\rho$&$2$\\
    &$\lambda$&$1$\\
    &$\gamma$&$3$\\
    &$H$&$\left[\begin{array}{cc}
        5&0\\
        0&5 
    \end{array}\right]$\\

    &$p$&$1(0.01)^*$\\
    &$\omega_0$&$1$\\
    &$p_\omega$&$10(1)^*$\\
    &Control frequency (Hz)&50\\
    \bottomrule
\end{tabular}
      \begin{tablenotes} 
    \item $^*$ $(\cdot)$ means the value in the second simulation.
     \end{tablenotes} 
\end{threeparttable} 
\vspace{-0.4cm}
\end{center}
\end{table}

The input constraint is
\begin{equation}
   -7\leq u_{1,2}\leq 7
\end{equation}

We consider two different scenarios: scenario one involves only one circular obstacle with radius $\rho$ and position $p_o$, and scenario two involves multiple circular obstacles with different size and the details is presented in \autoref{tab:multi-obstacles}. The goal is to navigate to the point $p_d$ and avoid collision. Hence, the desired state $s_d=\left[\begin{array}{ccc}
p_d&0&0
\end{array}\right]^T$. 

\begin{figure}[ht]
\centering
\subfigure[CLF and CBF values]{
\includegraphics[scale=.15]{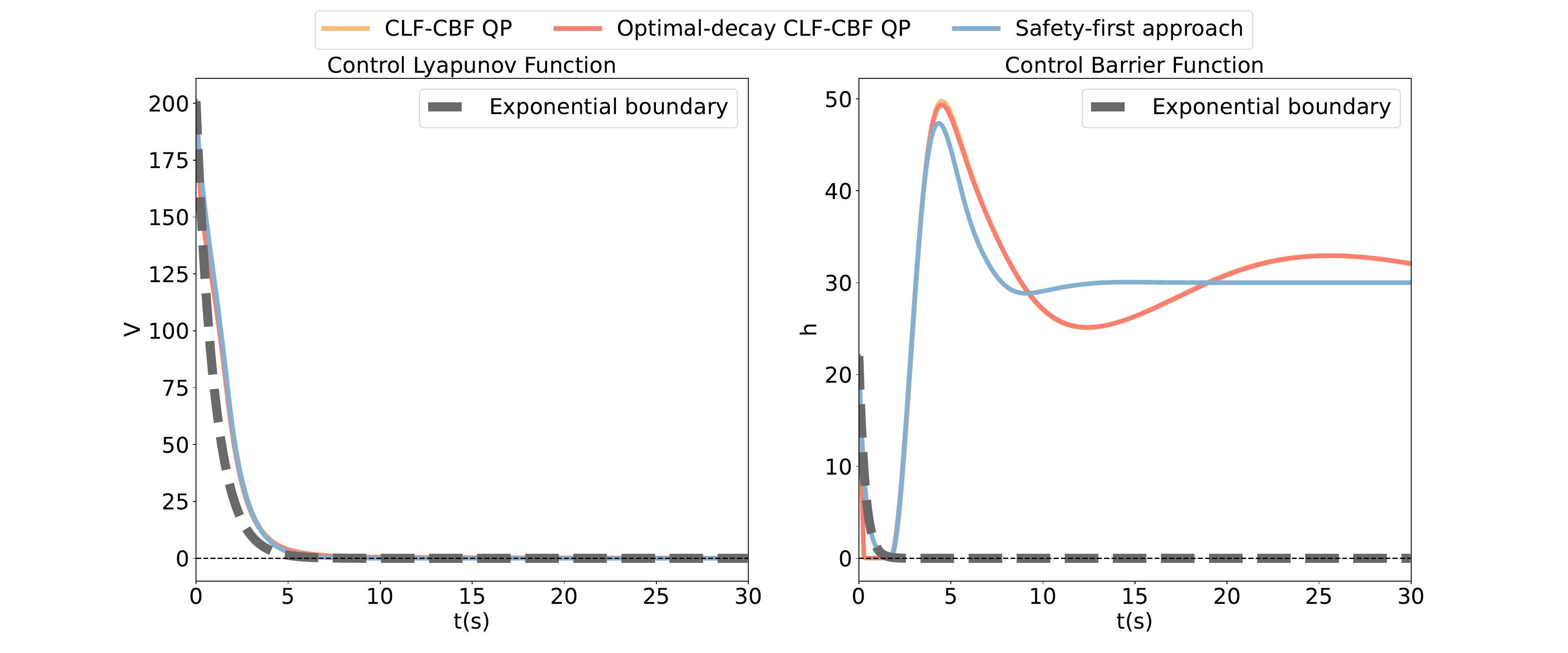}
    \label{fig:2DDIdata}}
\subfigure[Trajectories]{
\includegraphics[scale=.18]{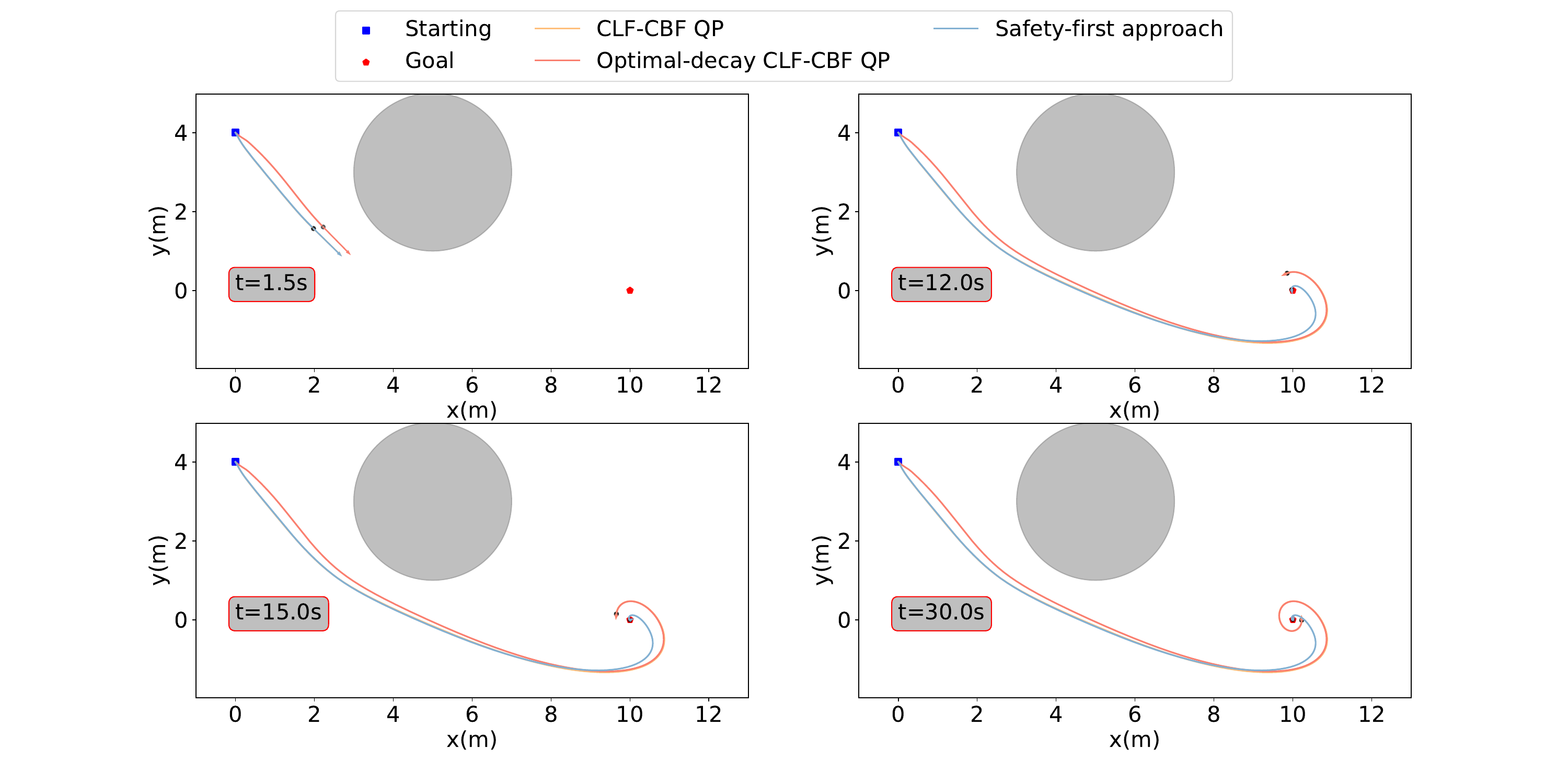}
    \label{fig:2DDItraj}}
\caption{Results of AGV collision avoidance simulation. All the methods avoid the obstacle, and Safety-first approach (blue) reaches the goal first.}
\end{figure}

We use the same CBF and CLF design as chosen in \cite{clf-cbf-helper}.  It uses linear quadratic regulator (LQR) to design a CLF. Assume $P$ is the solution of its algebraic Riccati equation, then the CLF is
\begin{equation}
    V=(s-s_d)^TP(s-s_d)
\end{equation}
and define CBF as 
\begin{equation}\label{cbfconstruct}
    h=(p-p_o)^T(p-p_o)-\rho^2+2(p-p_o)^Tv
\end{equation}
which means the sum of the square of distance from obstacle and its derivative.

\begin{figure}[hbt]
\centering
\subfigure[CLF and CBF values]{
\includegraphics[scale=.14]{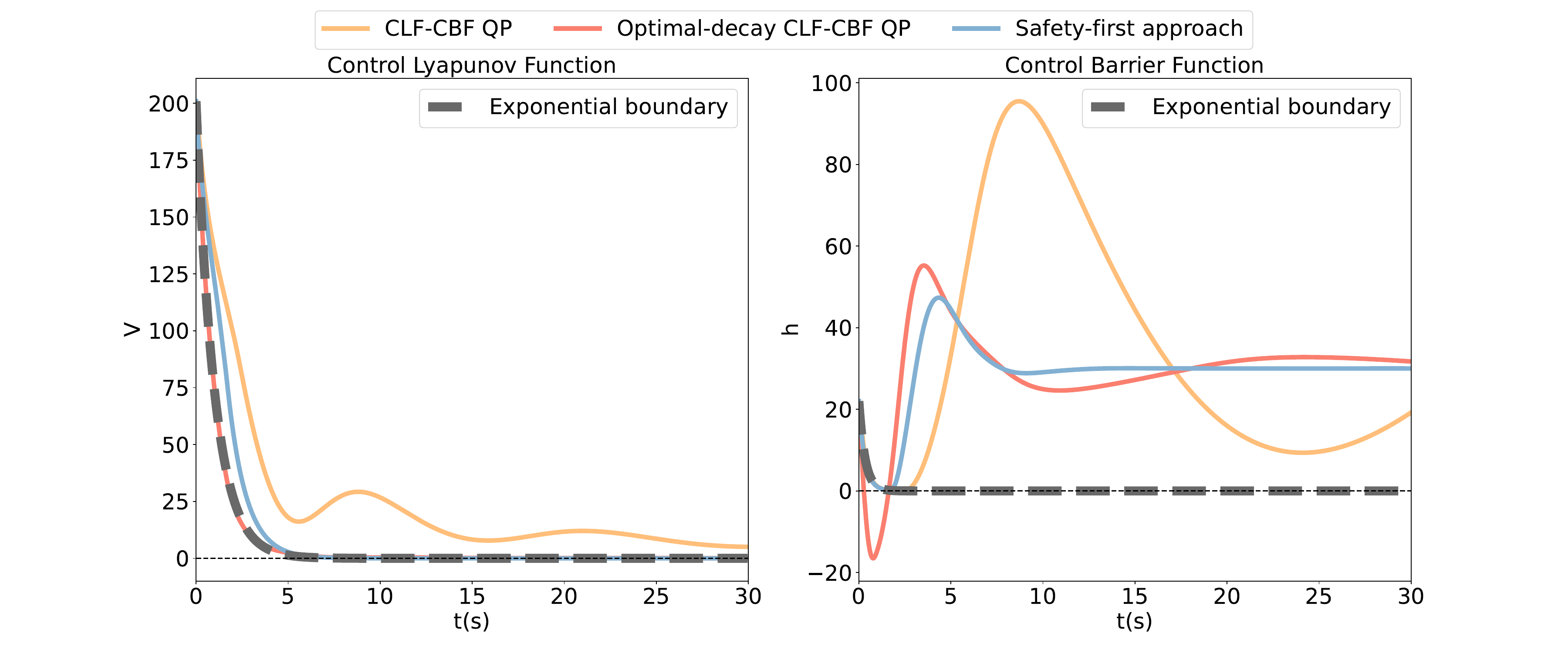}
    \label{fig:2DDIdata_slack}}
\subfigure[Trajectories]{
\includegraphics[scale=.14]{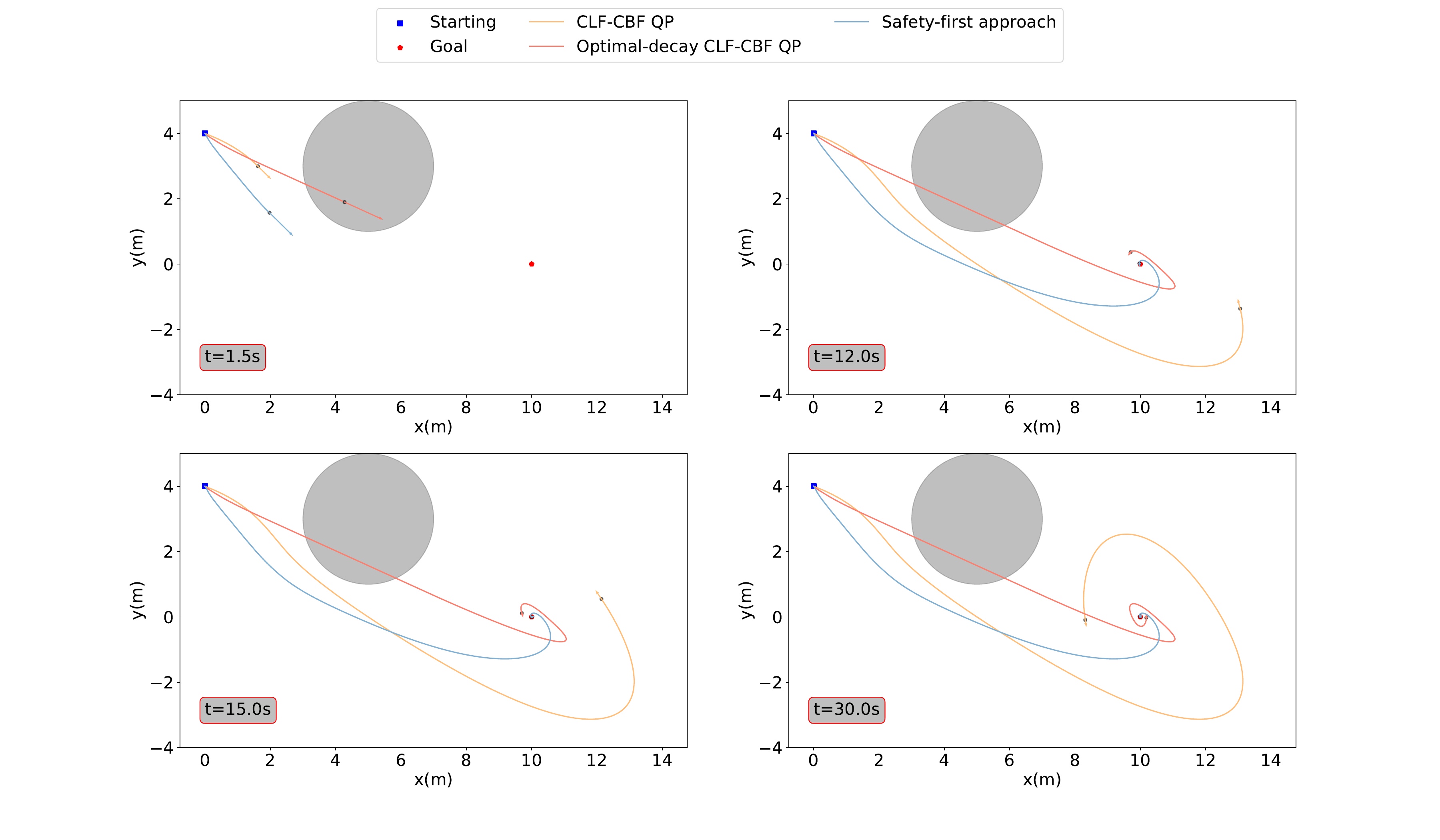}
    \label{fig:2DDItraj_slack}}
    \caption{Results of AGV collision avoidance simulation with smaller $p$ and $p_\omega$. Optimal-decay CLF-CBF QP (orange) drives the AGV towards the obstacle and collision, CLF-CBF QP (yellow) makes the AGV take a winding trajectory while Safety-first approach (blue) avoids collision and reaches the goal fastest.}
\end{figure}

\autoref{tab:2DDI_para} summarizes parameters set in simulations. All the parameters are fixed through the simulation except the values of weight $p$ and $p_\omega$. For comparative analysis, two sets of simulations with different values have been conducted: the first one with $p=1$ and $p_\omega=10$, and the second one with  $p=0.01$ and $p_\omega=1$.

\subsubsection{Results Analysis}\label{sub2sec:2DDIresults}
As shown in \autoref{fig:2DDIdata} and \autoref{fig:2DDItraj}, the controller derived from Optimal-decay CLF-CBF QP does not satisfy the exponential boundary of CBF at the start, but CLF-CBF QP and Safety-first approach do. Moreover, the trajectory of Safety-first approach reaches the goal point much quicker, since it satisfies the specified decay rate as much as possible. 

Then, we also have an extended simulation where $p=0.01$ and $p_\omega=1$, as shown in \autoref{fig:2DDIdata_slack} and \autoref{fig:2DDItraj_slack}. Compared with Safety-first approach, the convergence rate of CLF-CBF QP is slower, and Optimal-decay CLF-CBF QP can not guarantee safety any more. It also verifies our analysis in \autoref{subsec:safety} and \autoref{subsec:convergence}.

\subsubsection{Multi-obstacles cases}\label{sub2sec:multiobs}
In this part, we show that Safe-first CLF-CBF QP can also handle collision avoidance with multi-obstacles.

We construct a CBF for each obstacle following \eqref{cbfconstruct}. Priorities are determined by their CBF values: the lower the value is, the higher the priority is. All obstacles are set as \autoref{tab:multi-obstacles}, and other parameters are the same as \autoref{tab:2DDI_para}. As the trajectories shown in \autoref{fig:multiobs}, the AGV avoid obstacles successfully during navigation, verifying that Safety-first CLF-CBF QP works in multi-obstacles cases as well.

\begin{table}[hbtp]
\caption{Multi-obstacles}
\vspace{-0.4cm}
\label{tab:multi-obstacles}
\begin{center}
\begin{tabular}{lcccccc}
    \toprule
    &Obstacles&Center&Radius\\
    \midrule
    &1&[5,3]&1\\
    &2&[4,1]&1\\
    &3&[9,1]&1\\
    &4&[1,4]&0.5\\
    &5&[3,3]&0.5\\
    &6&[6,1]&0.3\\
    \bottomrule
    \vspace{-1cm}
\end{tabular}
\end{center}
\end{table}

\begin{figure}[hbt]
    \centering
\includegraphics[scale=.25]{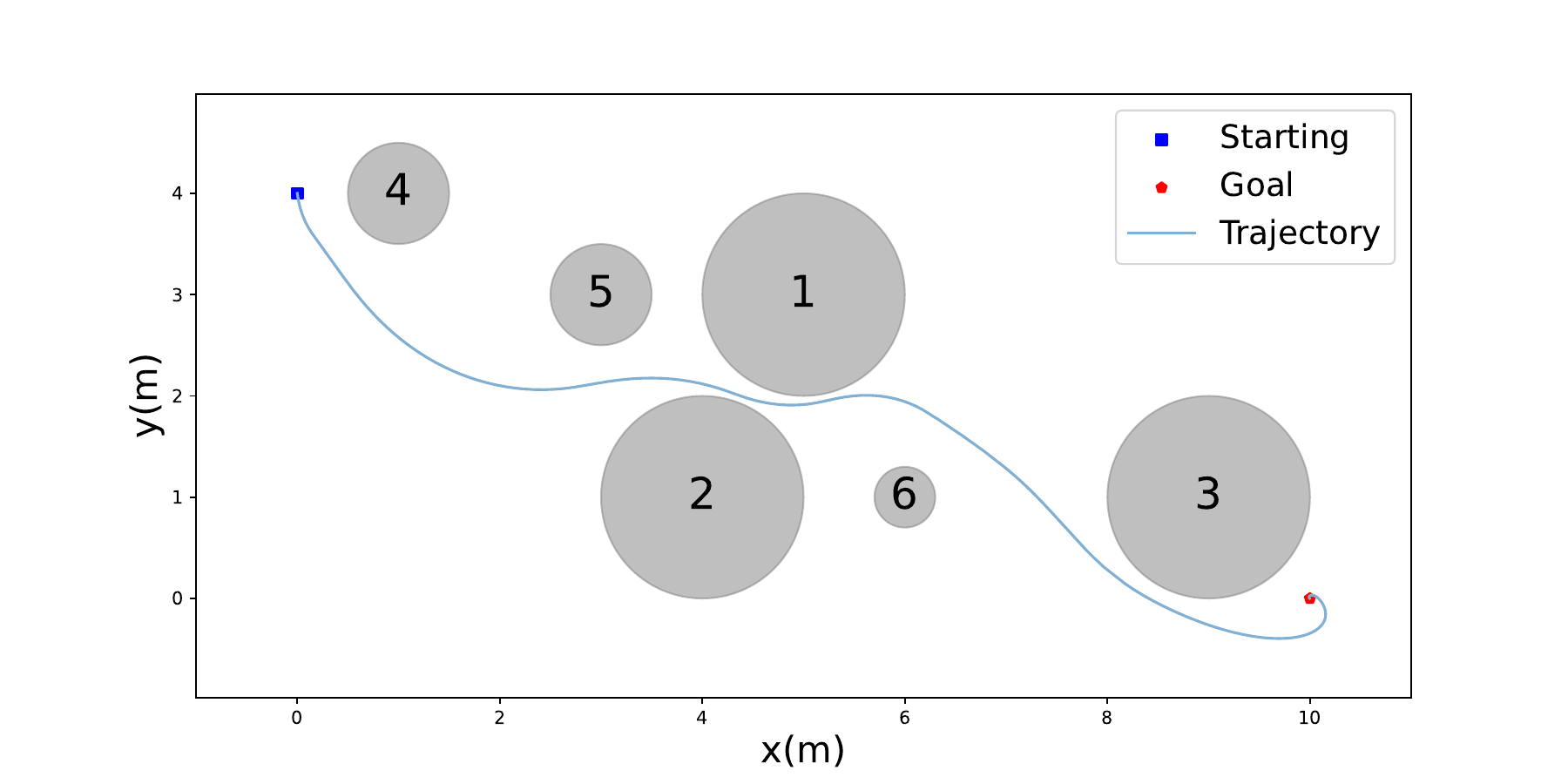}
    \caption{Multiple circular obstacles scenario. Safety-first CLF-CBF QP generates a smooth and collision-free path.}
    \label{fig:multiobs}
\end{figure}

\section{Conclusion}\label{sec:conclusion}
In this paper, we systematically analyse the shortcomings of existing CBF-based QP methods and develop the concept of sub-safety. Then, we propose a novel safety-first approach that addresses safety, stability and performance hierarchically and maintains feasibility all the time. Finally, various numerical simulations are provided to verify our analysis and the superiority of our proposed Safety-first CLF-CBF QP, and its flexibility in dealing with multi-certificate in multi-obstacles avoidance.





\bibliography{reference}
\bibliographystyle{IEEEtran}

\end{document}